\newtheorem{theorem}{Theorem}
\newtheorem{corollary}[theorem]{Corollary}
\newtheorem{lemma}[theorem]{Lemma}
\newtheorem{proposition}[theorem]{Proposition}
\newtheorem{claim}[theorem]{Claim}
\newtheorem{example}[theorem]{Example}
\theoremstyle{definition}
\newtheorem{definition}[theorem]{Definition}
\newtheorem{remark}[theorem]{Remark}
\newcommand{\Rco}{\operatorname*{Rco}} 
\newcommand{\Int}{\operatorname*{int}} 
\newcommand{\Pco}{\operatorname*{Pco}}
\newcommand{\Co}{\operatorname*{co}}
\newcommand{\mA}{\mathcal{A}}
\newcommand{\mF}{\mathcal{F}}
\newcommand{\mP}{\mathscr{P}}
\newcommand{\mM}{\mathcal{M}}
\newcommand{\mY}{\mathscr{Y}}
\newcommand{\mD}{\mathcal{D}}
\newcommand{\A}{\textbf{A}}
\newcommand{\R}{\mathbb{R}}
\newcommand{\N}{\mathbb{N}}
\newcommand{\X}{\textbf{X}}
\renewcommand{\H}{\mathrm{H}}
\newcommand{\noi}{\noindent}
\newcommand{\ms}{\medskip}
\newcommand{\al}{\alpha}
\newcommand{\be}{\beta}
\newcommand{\ga}{\gamma}
\newcommand{\de}{\delta}
\newcommand{\De}{\Delta}
\newcommand{\si}{\sigma}
\newcommand{\la}{\lambda}
\newcommand{\Om}{\Omega}
\newcommand{\D}{\mathrm{D}} 
\newcommand{\weak }{\, -\!\!\!\!-\!\!\!\rightharpoonup}
\newcommand{\weakstar }{ \overset{\, *_{\phantom{|}}}{{\smash{\weak }}\, } }
\newcommand{\larrow}{\longrightarrow}
\newcommand{\ot}{\otimes}
\newcommand{\lmapsto}{\longmapsto}
\newcommand{\ri}{\rightarrow}
\newcommand{\p}{\partial}
\newcommand{\sub}{\subseteq}
\newcommand{\by}{\times}
\newcommand{\rk}{\mathrm{rk}}
\newcommand{\ess}{\mathrm{ess}}
\newcommand{\dist}{\mathrm{dist}}
\newcommand{\Div}{\mathrm{Div}}
\newcommand{\supp}{\mathrm{supp}}
\newcommand{\bt}{\begin{theorem}}\newcommand{\et}{\end{theorem}}
\newcommand{\bd}{\begin{definition}}\newcommand{\ed}{\end{definition}}
\newcommand{\bl}{\begin{lemma}}\newcommand{\el}{\end{lemma}}
\newcommand{\beq}{\begin{equation}}\newcommand{\eeq}{\end{equation}}
\newcommand{\bc}{\begin{claim}}\newcommand{\ec}{\end{claim}}
\newcommand{\bex}{\begin{example}}\newcommand{\eex}{\end{example}}
\newcommand{\bcor}{\begin{corollary}}\newcommand{\ecor}{\end{corollary}}
\newcommand{\bp}{\begin{proof}}\newcommand{\ep}{\end{proof}}
\newcommand{\BPC}{\medskip \noindent \textbf{Proof of Claim} }
\newcommand{\BPP}{\medskip \noindent \textbf{Proof of Proposition} }
\newcommand{\BPT}{\medskip \noindent \textbf{Proof of Theorem} }
\numberwithin{equation}{section}
\begin{document}

\title[$\mD$-solutions in $L^\infty$ via the singular value problem]{$\mD$-solutions to the system of vectorial Calculus of Variations in $L^\infty$ via the singular value problem}

\author{Gisella Croce, Nikos Katzourakis and Giovanni Pisante}
\address{Normandie Univ, UNIHAVRE, LMAH, 76600 Le Havre, FranceÃÂ}
\email{gisella.croce@univ-lehavre.fr}

\address{Department of Mathematics and Statistics, University of Reading, Whiteknights, PO Box 220, Reading RG6 6AX, UK}
\email{n.katzourakis@reading.ac.uk}

  \thanks{\!\!\!\!\!\!\texttt{N.K. has been partially financially supported by the EPSRC grant EP/N017412/1}}

\address{Dipartimento di Matematica e Fisica, Seconda Universit\`a degli Studi di Napoli - S.U.N., Viale Lincoln, 5, c.a.p.  81100 Caserta, Italy}
\email{giovanni.pisante@unina2.it}



\date{}


\keywords{Vectorial Calculus of Variations in $L^\infty$; Generalised solutions;  Fully nonlinear systems; $\infty$-Laplacian; Young measures; Singular Value Problem, Baire Category method; Convex Integration}

\begin{abstract} {For $\mathrm{H}\in C^2(\R^{N\by n})$ and $u :\Omega \subseteq \R^n \to \R^N$, consider the system 
\[ \label{1}
\mathrm{A}_\infty  u\, :=\,\Big(\mathrm{H}_P \otimes \mathrm{H}_P + \mathrm{H}[\mathrm{H}_P]^\bot \mathrm{H}_{PP}\Big)(\D u):\D^2u\, =\,0. \tag{1}
\]
We construct $\mD$-solutions to the Dirichlet problem for {\eqref1}, an apt notion of generalised solutions recently proposed for fully nonlinear systems. Our $\mD$-solutions are $W^{1,\infty}$-submersions and are obtained without any convexity hypotheses for $\mathrm{H}$, through a result of independent interest involving existence of strong solutions to the singular value problem for general dimensions $n\neq N$.
}
\end{abstract} 

\maketitle


\section{Introduction} \label{section1}

Let $\H \in C^2(\R^{N\by n})$ be a given function and $\Om\sub \R^n$ a given open set, $n,N\in \N$. In this paper we are interested in the problem of existence of appropriately defined generalised solutions with given Dirichlet boundary conditions to the second order PDE system
\begin{equation}\label{1.1}
\mathrm{A}_\infty  u\, :=\,\Big(\H_P \ot \H_P + \H[\H_P]^\bot \H_{PP}\Big)(\D u):\D^2u\, =\,0. 
\end{equation}
In the above, the subscript $P$ denotes the derivative of $\H$ with respect to its matrix variable, while 
\[
\begin{split}
\D u(x)\ &=\ \big( \D_i u_\al(x)\big)_{i=1,...,n}^{\al=1,...,N} \ \ \ \, \in \R^{N\by n},
\\ 
\D^2 u(x)\ &=\ \big( \D^2_{ij} u_\al(x)\big)_{i,j=1,...,n}^{\al=1,...,N} \ \  \in \R^{N\by n^2}_s,
\end{split}
\]
denote respectively the gradient matrix and the hessian tensor of (smooth) maps $u : \Om\sub \R^n \larrow \R^N$. The notation ``$[\H_P]^\bot$'' symbolises the orthogonal projection on the orthogonal complement of the range of the linear map $\H_P(P) : \R^n \larrow \R^N$:
\begin{equation}\label{1.2}
[\H_P(P)]^\bot\,:=\ \textrm{Proj}_{R(\H_P(P))^\bot}.
\end{equation}
In index form, \eqref{1.1} reads
\[
\sum_{\be=1}^N\sum_{i,j=1}^n\left(\H_{P_{\al i}}(\D u)\,\H_{P_{\be j}}(\D u) \,+\, \H(\D u)\sum_{\ga=1}^N \big[\H_P(\D u)\big]_{\al \ga}^\bot \H_{P_{\ga i}P_{\be j}}(\D u)\right)\, \D_{ij}^2u_\be\, =0,
\]
$\al=1,...,N$. Our general notation is either self-explanatory or a convex combination of standard symbolisations as e.g.\ in \cite{E, D, EG, DM2}. The system \eqref{1.1} is the analogue of the Euler-Lagrange equation when one considers nonstandard vectorial variational problems in the space $L^\infty$ for the supremal functional
\begin{equation}\label{1.3}
\ \ \ \ \mathrm{E}_\infty(u,\Omega')\, :=\, \big\| \H(\D u)\big\|_{L^\infty(\Om')},
\ \ \ u\in W^{1,\infty}_{\text{loc}}(\Om,\R^N),\ \  \Omega'\Subset \Omega
\end{equation}
and first arose in recent work of the second author (\cite{K1}). Calculus of Variations in $L^\infty$, as the field is known today, was initiated by G.\ Aronsson in the 1960s who studied the scalar case $N=1$ quite systematically (\cite{A1}-\cite{A7}). Since then the area has been developed marvellously due to both the intrinsic mathematical interest and the importance for applications. In particular, the theory of Viscosity Solutions of Crandall-Ishii-Lions played a fundamental role in the study of the generally singular solutions to the scalar version of \eqref{1.1}. When $N=1$, the respective single equation simplifies to
\begin{equation}\label{1.4}
\mathrm{A}_\infty  u \, =\, \H_P(\D u)\ot \H_P(\D u) :\D^2u\,=\,0
\end{equation}
and is known as the ``Aronsson equation". For a pedagogical introduction to the scalar case with numerous references see e.g.\ \cite{K7,C} (see also \cite{ACJ, BEJ, SS, CR} for some relevant work and \cite{Pi} for a comparison between the Viscosity Solutions approach and the Baire Category one). 

On the other hand, except perhaps the deep vectorial contributions in \cite{BJW1, BJW2} which however were restricted to the study of the functional only, until the early 2010s the case $N\geq 2$ remained terra incognita. The foundations of the general vector case have been laid in a series of recent papers of one of the authors (\cite{K1}-\cite{K6}, \cite{K8}-\cite{K12}) and also in collaboration with Abugirda, Manfredi and Pryer (\cite{AK, KP, KM}). An important special case of \eqref{1.1} to which the results of this paper apply is the archetypal model of the so-called $\infty$-Laplace system
\begin{equation}\label{1.5}
\left\{\ \ 
\begin{split}
\De_\infty u\, :=&\,\Big(\D u \ot \D u + |\D u|^2[\D u]^\bot \ot \mathrm{I}\Big):\D^2u\, =\,0,
\\
&\ \ \ \ \ \ [\D u]^\bot \, =\, \textrm{Proj}_{R(\D u)^\bot},
\end{split}
\right.
\end{equation}
which arises by taking as $\H$ the square of the Euclidean norm of $\R^{N\by n}$:
\[
\ \ \ \ (u,\Omega')\, \lmapsto \, \big\| |\D u|^2\big\|_{L^\infty(\Om')},
\ \ \ u\in W^{1,\infty}_{\text{loc}}(\Om,\R^N),\ \  \Omega'\Subset \Omega.
\]
In index form, \eqref{1.5} becomes
\[
\sum_{\be=1}^N\sum_{i,j=1}^n\Big(\D_i u_\al \D_ju_\be \,+\, |\D u|^2[\D u]_{\al \be}^\bot \,\de_{ij}\Big)\, \D_{ij}^2u_\be\, =\,0, \ \ \ \ \al=1,...,N. 
\]
An additional difficulty in the study of the non-divergence system \eqref{1.1} which is not present in the scalar case of \eqref{1.4} is that the operator $\mathrm{A}_\infty $ \emph{has discontinuous coefficients, even when it is applied to $C^\infty$ maps}. Actually, even in the model case of \eqref{1.5}, the example $u(x_1,x_2)=e^{ix_1}-e^{ix_2}$ is a $2\by 2$ analytic $\infty$-Harmonic map near the origin of $\R^2$, but the rank of the gradient jumps from $2$ off the diagonal $\{x_1=x_2\}$ to $1$ on it and $[\D u]^\bot$ is discontinuous. The emergence of interfaces whereon the coefficients are discontinuous is a general phenomenon studied in some detail in \cite{K1,K3,K4}. A manifestation of this fact is that \emph{in the genuine vectorial case of rank $\rk(\D u)\geq 2$, the appropriate minimality notion connecting \eqref{1.1} to \eqref{1.3} is not the obvious extension of Aronsson's scalar notion of Absolute Minimisers} (see \cite{K2, K10, K11}).

Perhaps the greatest difficulty associated to the study of \eqref{1.1} is that it is quasilinear, non-divergence and non-monotone and all standard approaches in order to define generalised solutions based on integration-by-parts or on the maximum principle seem to fail. Motivated partly by the systems arising in Calculus of Variations in $L^\infty$, the second author has recently proposed in \cite{K9,K8} a new efficient theory of generalised solutions which applies to fully nonlinear PDE systems of any order
\[ 
\mF\Big(x,u(x),\D u(x),...,\D^pu(x)\Big)\, =\, 0, \quad x\in\Om,
\]
and allows for \emph{merely measurable} mappings to be admitted as solutions.  This approach of the so-called \emph{$\mD$-solutions} is duality-free and is based on the probabilistic representation of derivatives which do not exist classically. The tools is the weak* compactness of the difference quotients considered in the Young measures valued into a compactification of the ``space of jets".

For the special case at hand of $2$nd order systems like \eqref{1.1}-\eqref{1.5}, the definition can be motivated as follows: let $u$ be a strong a.e.\  $W^{2,\infty}_{\text{loc}}(\Om,\R^N)$ solution to
\begin{equation}  \label{1.6}
\ \ \ \mF\big(\D u,\D^2u\big)\,=\, 0, \quad \text{a.e.\ on }\Om.
\end{equation}
We are looking for a meaningly way to interpret the hessian rigorously for just $W^{1,\infty}_{\text{loc}}(\Om,\R^N)$ because this is the natural regularity class for \eqref{1.1} arising from the $L^\infty$ variational problem for \eqref{1.3}. For reasons to become apparent in a moment, we restate \eqref{1.6} as: for any compactly supported $\Phi \in C_c\big( \R^{N\by n^2}_s \big)$, we have
\begin{equation} \label{1.7}
\ \ \ \int_{\R^{N\by n^2}_s} \Phi(\X)\, \mF\big(\D u(x),\X\big)\, d[\de_{\D^2 u(x)} ](\X)\, =\, 0, \quad \text{ a.e. }x\in \Om.
\end{equation}
That is, we view the classical hessian map $\D^2 u$ as a probability-valued map $\Om\sub\R^n \larrow \mathscr{P}\big(\R^{N\by n^2}_s\big)$ given by the Dirac measure at itself $x\mapsto \de_{\D u(x)}$. Further, we restate that $\D^2 u$ is the limit in measure of the difference quotients of the gradient $\D^{1,h}\D u$ as $h\ri 0$ by writing
\begin{equation} \label{1.8}
\ \ \ \de_{\D^{1,h}\D u} \weakstar\, \de_{\D^2 u}, \quad \text{as }h\ri0.
\end{equation}
The weak* convergence of \eqref{1.8} is in the Young measures valued into $\R^{N\by n^2}_s$, that is the set of weakly* measurable probability-valued maps $\Om\sub\R^n \larrow \mathscr{P}\big(\R^{N\by n^2}_s\big)$ (for details see Section \ref{section2} that follows and \cite{CFV, FG, V, P, FL}). The hope arising from \eqref{1.7}-\eqref{1.8} is that it might be possible for general probability-valued ``diffuse hessians" to arise for just $W^{1,\infty}_{\text{loc}}$ maps which will no longer be the concentration masses $\de_{\D^2 u}$. This is indeed possible if we embed $\R^{N\by n^2}_s$ into its $1$-point Alexandroff compactification 
\[
\smash{\overline{\R}}^{N\by n^2}_s\, :=\, \R^{N\by n^2}_s\cup\{\infty\}. 
\]
By considering $(\de_{\D^{1,h}\D u})_{h\neq 0}$ as Young measures valued into $\smash{\overline{\R}}^{N \by n^2}_s$, subsequential weak* limits always do exist in the set of Young measures $\Om\sub\R^n \larrow \mathscr{P}\big(\smash{\overline{\R}}^{N \by n^2}_s\big)$ and we may define generalised hessians as follows:

\begin{definition}[Diffuse Hessians, cf.\ \cite{K8}]  \label{Diffuse Derivatives}
For any $ u \in W^{1,\infty}_{\text{loc}}(\Om,\R^N)$, we define the diffuse hessians $\mD^2 u$ as the infinitesimal subsequential limits of difference quotients of $\D u$ in the Young measures valued into the sphere $\smash{\overline{\R}}^{N\by n^2}_s$:
\[
\ \ \de_{\D^{1,h_{m}}\D u}\weakstar \mD^2 u, \  \ \ \, \text{ in }\mY\big(\Om,\smash{\overline{\R}}^{N\by n^2}_s\big), \ \ \ \text{ as }m\ri \infty.
\]
\end{definition} 
Obviously,
\[
\D^{1,h}v(x)\, =\,  \left(\frac{v(x+he^1)-v(x)}{h},...\, ,\frac{v(x+he^n)-v(x)}{h} \right),\ \ \ h\neq 0
\]
where $\{e^1,...,e^n\}$ is the usual basis of $\R^n$. By the weak* compactness of the set $\mY\big(\Om,\smash{\overline{\R}}^{N\by n^2}_s\big)$, every map possesses at least one diffuse hessian (and actually exactly one if it is twice differentiable in measure \`a la Ambrosio-Mal\'y \cite{AM}, see \cite{K8}).

\begin{definition}[Lipschitzian $\mD$-solutions of $2$nd order PDE systems, cf.\ \cite{K8}] \label{definition11} Let 
\[
\ \mF\ : \ \ \R^{N\by n}\!\by \R^{N \by n^2}_s \larrow \R^N
\]
be a Borel measurable map. A mapping  $u : \Om\sub \R^n \larrow \R^N$ in $W^{1,\infty}_{\text{loc}}(\Om,\R^N)$ is a $\mD$-solution to 
\begin{equation}\label{2.11}
\mF\big(\D u,\D^2 u\big)\, =\,0 \ \ \text{ in }\Om,
\end{equation}
if for any diffuse hessian $\mD ^2 u \in \mY(\Om,\smash{\overline{\R}}^{N\by n^2}_s)$ and any $\Phi \in C_c\big( {\R}^{N\by n^2}_s \big)$, we have
\[
\ \ \ \int_{\smash{\overline{\R}}^{N\by n^2}_s} \Phi(\X)\, \mF\big(\D u(x),\X \big)\, d\big[\mD^2 u(x) \big](\X)\, =\, 0, \ \ \text{ a.e.\ }x\in \Om.
\] 
 \end{definition}

The notion of generalised solution given by Definitions \ref{Diffuse Derivatives} \& \ref{definition11} will be our central notion of solution for \eqref{1.1}. By the motivation of the notion and the results of Section \ref{section2} it follows that it is compatible with strong and classical solutions (for more see \cite{K8}-\cite{K12}).
 {More precisely, if a $\mD$-solution is twice differentiable a.e.\  on $\Om$, then 
\eqref{1.8} says that all diffuse derivatives coincide a.e.\ on $\Om$ and are given by the Dirac measure $\de_{\D^2 u}$ at the hessian. Hence, the integral formula \eqref{1.7} reduces to the definition of strong a.e.\ solutions.}

The principal result of this paper is the existence of $\mD$-solutions to the Dirichlet problem for \eqref{1.1} when $N\leq n$. These solutions have extra geometric properties, being $W^{1,\infty}$-submersions and solving a certain system of vectorial Hamilton-Jacobi equations associated to \eqref{1.1}-\eqref{1.3}. Our \emph{only assumptions} on $H$ are that
\begin{itemize}

\item $\H$ is non-negative, $C^1$ on $\R^{N\by n}$ and $C^2$ on $\{P \in \R^{N\by n} : \rk(P)=N\}$, 

\item $\H(P)$ depends on $P$ via $PP^\top \in \R^{N\by N}_s$,

\item $\rk(\H_P(P))=N$ when $\rk(P)=N$.

\end{itemize}
In particular, we \emph{do not assume any kind of convexity of ``BJW-quasiconvexity" or level-convexity for $\H$, not even $C^2$ smoothness on the whole of $\R^{N\by n}$} but merely on the open set of matrices of full rank. The notion of ``BJW-quasiconvexity" introduced and studied in \cite{BJW1, BJW2} is the correct notion for weak* lower semi-continuity of $L^\infty$ functionals. Examples of $\H$ to which our results apply are given  by $\H_1(P)=|AP|^\al$ for $\al>1$ and $A\in \R^{N\by N}_{\geq 0}$ and by $\H_2(P)=(|P|^2+1)^{-1}$. The example $\H_2$ does not even have rank-one convex sublevel sets. The solutions we construct  are obtained by the celebrated \emph{Baire Category method} of Dacorogna-Marcellini (see \cite{DM2} and \cite{D, DP}) for a certain $1$st order differential inclusion relevant to \eqref{1.1} which we expound after the statement:

\begin{theorem}[Existence of $\mD$-solutions to the Dirichlet problem] \label{theorem3} Let $N\leq n$ and $\H : \R^{N\by n} \larrow [0,\infty)$ be such that 
\[
H(P)\, =\, h\big(PP^\top\big)
\]
for some $h : \R^{N\by N}_{\geq 0} \larrow [0,\infty)$ satisfying
\begin{itemize}

\item $h \in C^1\big(\R^{N\by N}_{\geq 0}\big) \cap C^2\big(\R^{N\by N}_{> 0}\big)$ ,

\item the derivative $h_X(X)$ is symmetric and $\det(h_X(X))\neq 0$ for $X>0$.

\end{itemize}
Then, for any open $\Om\sub \R^n$ and $g\in W^{1,\infty}(\Om,\R^N)$, the Dirichlet problem for \eqref{1.1}
\begin{equation}\label{1.10}
\left\{
\begin{array}{rl}
\mathrm{A}_\infty  u \, =\, 0, & \text{ in }\Om,\\
u\, =\, g,  & \text{ on }\p\Om,
 \end{array}
 \right.
\end{equation}
has an infinite set of $\mD$-solutions in the class of Lipschitz submersions 
\[
\mA\,:=\, \Big\{v\in W^{1,\infty}_g(\Om,\R^N)\ :\ \rk(\D v)=N, \text{ a.e.\ on }\Om \Big\}.
\]
Namely, there is an infinite set of $u\in \mA$ such that for any $\mD ^2 u \in \mY(\Om,\smash{\overline{\R}}^{N\by n^2}_s)$,
\[
\ \ \ \int_{\smash{\overline{\R}}^{N\by n^2}_s} \Phi(\X)\, \Big(\H_P \ot \H_P + \H[\H_P]^\bot \H_{PP}\Big)(\D u):\X\, d[\mD^2u](\X)\, =\, 0
\] 
a.e.\ on $\Om$, for any $\Phi \in C_c\big( {\R}^{N\by n^2}_s \big)$.
\end{theorem}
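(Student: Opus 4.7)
The plan is to reduce the second-order system \eqref{1.1}, restricted to the submersion class $\mA$, to a first-order vectorial Hamilton-Jacobi equation, and then to produce uncountably many solutions via the Baire Category method of Dacorogna-Marcellini applied to an equivalent singular value differential inclusion.

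Under the hypothesis $\rk(\H_P(P))=N$ for full-rank $P$, the projection $[\H_P(P)]^\bot$ vanishes on $\{\rk(P)=N\}$, so for any $u\in \mA$ the second summand $\H[\H_P]^\bot\H_{PP}(\D u)$ in the coefficient of $\mathrm{A}_\infty$ is identically zero a.e.\ on $\Om$. If moreover $u$ satisfies the first-order constraint $\H(\D u)=c$ a.e.\ on $\Om$ for some $c\geq 0$, then for classical $u$ the chain rule gives $\H_P(\D u):\D^2 u=0$ componentwise, and hence $(\H_P\ot\H_P)(\D u):\D^2 u=0$ a.e. Thus it suffices to produce $u\in\mA$ with $u|_{\p\Om}=g$ satisfying $\H(\D u)=c$ a.e.

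Since $\H(P)=h(PP^\top)$, the level set
\[
K_c\,:=\,\big\{P\in\R^{N\by n}:\,\H(P)=c,\ \rk(P)=N\big\}
\]
is right-$O(n)$-invariant and encodes a constraint on the singular values of $\D u$. The first-order problem then becomes the differential inclusion $\D u\in K_c$ a.e.\ on $\Om$, $u=g$ on $\p\Om$, for $c$ large enough (depending on $g$ and $h$) that $K_c\neq\emptyset$ and $\D g$ lies in the interior of the appropriate hull of $K_c$. Solvability proceeds by the standard Baire scheme: one introduces a complete metric space $\mX$ of approximate subsolutions $v\in W^{1,\infty}_g(\Om,\R^N)$ with $\D v$ valued in a suitable hull of $K_c$, and shows that $\{v\in\mX:\D v\in K_c \text{ a.e.}\}$ is residual (a countable intersection of open dense subsets of $\mX$), hence uncountable. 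The decisive technical input is an approximation lemma producing laminating perturbations of $v$ which push $\D v$ closer to $K_c$ on a set of positive measure while preserving the boundary datum; this is precisely the independent result on the singular value problem announced in the abstract, now in the genuinely rectangular case $N<n$.

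It remains to verify that each such $u$ meets the integral identity of Definition \ref{definition11} for every diffuse hessian $\mD^2 u$. The second-summand coefficient $\H[\H_P]^\bot\H_{PP}(\D u)$ vanishes a.e.\ by construction. For the first summand, since $\H(\D u)\equiv c$, the difference quotients $\D^{1,h}[\H(\D u)]$ vanish a.e.\ on $\Om$; combining this with the $C^2$ smoothness of $\H$ on full-rank matrices (Taylor expansion on $\mA$) and passing to the weak$^*$ Young-measure limit of $\de_{\D^{1,h}\D u}$ as in Definition \ref{Diffuse Derivatives}, one obtains, for any diffuse hessian,
\[
\int_{\smash{\overline{\R}}^{N\by n^2}_s}\H_P(\D u(x)):\X\ d[\mD^2 u(x)](\X)\,=\,0\quad\text{a.e.\ on }\Om,
\]
and contraction with $\H_P(\D u(x))$ in the outer index followed by multiplication by a test function $\Phi(\X)\in C_c(\R^{N\by n^2}_s)$ yields the required identity. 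The main obstacle is the singular value construction underlying the Baire step: the square case $N=n$ is close in spirit to classical Dacorogna-Marcellini, but the production of admissible lamination perturbations respecting the singular-value profile in the rectangular case $N<n$ requires new ideas and is developed as an independent theorem in the paper. Granting it, the structural reduction and the $\mD$-solution upgrade are transparent consequences of the full-rank hypothesis on $\H_P$.
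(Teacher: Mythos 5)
Your overall architecture is the same as the paper's: solve the singular-value problem to produce $u\in W^{1,\infty}_g$ with $\D u\,\D u^\top=c^2\mathrm{I}$ a.e.\ (hence $\H(\D u)=h(c^2\mathrm{I})$ is constant, $\rk(\H_P(\D u))=N$, and $[\H_P(\D u)]^\bot=0$), and then upgrade to a $\mD$-solution via Taylor's theorem applied to the constant first integral $\H(\D u)$. However, there is a genuine gap in your treatment of the Young-measure limit. You assert the intermediate identity
\[
\int_{\smash{\overline{\R}}^{N\by n^2}_s}\H_P(\D u(x)):\X\ d[\mD^2 u(x)](\X)\,=\,0,
\]
but this integral is not well-defined: the integrand $\X\mapsto\H_P(\D u(x)):\X$ is linear, hence unbounded, and does not extend continuously to the one-point compactification $\smash{\overline{\R}}^{N\by n^2}_s$, on which the diffuse hessian $\mD^2u(x)$ is a priori allowed to charge $\{\infty\}$. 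Your subsequent ``multiplication by $\Phi$ in the outer index'' after the integral has been taken is not a legitimate operation and cannot repair this. The whole point of Definitions \ref{Diffuse Derivatives}--\ref{definition11} is that the cut-off $\Phi\in C_c(\R^{N\by n^2}_s)$ must be \emph{inside} the integral from the start; the paper multiplies the pointwise difference-quotient identity by $\Phi(\D^{1,h_m}\D u(x))$ \emph{before} passing to any limit, so that every integrand sits in $C(\smash{\overline{\R}}^{N\by n^2}_s)$.

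You also omit the second half of the technical argument entirely: the Taylor expansion of $\H(\D u(x+h_m e^i))-\H(\D u(x))$ produces, besides the leading term $\H_P(\D u):\D^{1,h_m}\D u$, a remainder tensor (the paper's $\textbf{E}^m$, built from $\int_0^1[\H_{P}(\D u(x)+\la[\D u(x+h_me^i)-\D u(x)])-\H_P(\D u(x))]\,d\la$). One must show $|\textbf{E}^{m_k}|\ri 0$ in $L^1_{\text{loc}}$ along a subsequence on which translates $\D u(\cdot+h_{m_k}e^i)\ri\D u$ a.e.\ (continuity of translations in $L^1$ plus dominated convergence, using only $\H\in C^1$ and $\D u\in L^\infty$), deduce convergence of the Carath\'eodory maps $\Psi^{m_k}\ri\Psi^\infty$ in $L^1(E,C(\smash{\overline{\R}}^{N\by n^2}_s))$, and then invoke weak*-strong continuity of the duality pairing. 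Without this, ``passing to the weak* Young-measure limit'' is not justified. A final minor point: you take $\rk(\H_P(P))=N$ on full-rank matrices as a hypothesis, but in the theorem as stated it must be derived from the assumptions on $h$ via the computation $\H_{P_{\al i}}(P)=2\sum_\be h_{X_{\al\be}}(PP^\top)P_{\be i}$, i.e.\ $\H_P(P)=2\,h_X(PP^\top)P$, so that $\det(h_X)\neq0$ and $\rk(P)=N$ give $\rk(\H_P(P))=N$.
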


\begin{proposition}[Geometric properties of $\mD$-solutions to \eqref{1.10}]  \label{corollary4} In the setting of Theorem \ref{theorem3}, we also have the following extra properties for our $\mD$-solutions: for any 
\[
c\, >\, \|\D g\|_{L^\infty(\Om)}
\]
there is an infinite set of solutions $\mA_c\sub \mA$ such that each $u \in \mA_c$ solves the coupled system of Hamilton-Jacobi equations
\begin{equation}\label{1.11}
\left\{
\begin{array}{ll}
\H(\D u)=\H\big([c\,\mathrm{I}\,|\,\mathrm{0}\, ]\big), & \text{ a.e.\ on }\Om,\\
\big[\H_P(\D u)\big]^\bot\, =\,0, & \text{ a.e.\ on }\Om,\ms\\
\det\big(\D u \D u^{\!\top} \big)\, =\, c^{2N},  & \text{ a.e.\ on }\Om,\ms\\
u\,=\,g, & \text{ on }\p\Om.
 \end{array}
 \right.
\end{equation}
\end{proposition}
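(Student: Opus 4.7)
The plan is to read Proposition \ref{corollary4} as a geometric refinement of the construction carried out for Theorem \ref{theorem3} rather than as an independent result. The abstract signals that the $\mD$-solutions arise via the \emph{singular value problem}, so the appropriate first order differential inclusion attacked by the Baire category method is
\begin{equation*}
\D u(x)\,\in\, K_c\ \text{ for a.e.\ }x\in\Om, \qquad u\,=\,g\ \text{ on }\p\Om,
\end{equation*}
with target
\begin{equation*}
K_c\,:=\,\big\{P\in\R^{N\by n}\,:\,PP^\top=c^2\mathrm{I}\big\},
\end{equation*}
namely the manifold of $N\by n$ matrices whose $N$ singular values all coincide with $c$. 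The hypothesis $c>\|\D g\|_{L^\infty(\Om)}$ forces each singular value of $\D g(x)$ to be strictly below $c$ a.e., which is the standard compatibility condition placing $\D g$ in the interior of the appropriate hull of $K_c$; this is what allows the Dacorogna--Marcellini scheme to produce an infinite set $\mA_c\sub W^{1,\infty}_g(\Om,\R^N)$ whose elements satisfy $\D u\in K_c$ a.e. Since any $P\in K_c$ has $\rk(P)=N$, one automatically has $\mA_c\sub\mA$.

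It then remains to verify the four pointwise identities of \eqref{1.11} for any $u\in\mA_c$. Setting $P:=\D u(x)$ for a.e.\ $x\in\Om$, the membership $P\in K_c$ gives $PP^\top=c^2\mathrm{I}$ and hence $\det(PP^\top)=c^{2N}$, which is the third equation. Since $[c\,\mathrm{I}\,|\,\mathrm{0}\,][c\,\mathrm{I}\,|\,\mathrm{0}\,]^\top=c^2\mathrm{I}$ as well, the structural hypothesis $\H(P)=h(PP^\top)$ yields
\begin{equation*}
\H(P)\,=\,h(c^2\mathrm{I})\,=\,\H\big([c\,\mathrm{I}\,|\,\mathrm{0}\,]\big),
\end{equation*}
the first equation. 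The assumption $\rk(\H_P(P))=N$ whenever $\rk(P)=N$ forces $R(\H_P(P))=\R^N$, whence the orthogonal projection onto $R(\H_P(P))^\bot$ vanishes and $[\H_P(P)]^\bot=0$; this is the second equation. The boundary condition is built into the admissible class $W^{1,\infty}_g$.

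The $\mD$-solution property for \eqref{1.1} is inherited directly from Theorem \ref{theorem3}, so no further argument is needed on the diffuse-hessian side. The only genuine obstacle is producing $\mA_c$ non-empty (in fact infinite): this amounts to running the Baire category construction for the inclusion $\D u\in K_c$ under the structural hypotheses on $\H$, which is precisely what the proof of Theorem \ref{theorem3} achieves via the singular value problem. The present proposition therefore reduces to unpacking the pointwise algebraic consequences of $P\in K_c$ in the presence of $\H(P)=h(PP^\top)$ and $\rk(\H_P)=N$ on full rank matrices.
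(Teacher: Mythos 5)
Your proposal is correct and follows essentially the same route as the paper: the paper proves a preliminary Claim that there are infinitely many $u\in W^{1,\infty}_g$ with $\D u\,\D u^{\!\top}=c^2\mathrm{I}$ a.e., which it obtains by rescaling to the boundary datum $g/c$, invoking Corollary \ref{cor:main} for the prescribed singular value problem, rescaling back, and applying the Spectral Theorem --- exactly the inclusion $\D u\in K_c=cE$ that you pose directly --- and then reads off the four identities of \eqref{1.11} by the same algebra you give. The one step you glide over is the second identity: you invoke the informal bullet ``$\rk(\H_P(P))=N$ when $\rk(P)=N$'' from the introduction, but the formal hypotheses of Theorem \ref{theorem3} are stated on $h$, and the paper closes this gap with the chain-rule computation $\H_P(P)=2\,h_X(PP^{\!\top})\,P$ (valid thanks to the symmetry of $h_X$), from which $\rk(\H_P(\D u))=\rk(\D u)=N$ follows because $\det\big(h_X(\D u\,\D u^{\!\top})\big)\neq 0$ on $\R^{N\times N}_{>0}$.
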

Theorem \ref{theorem3} and Proposition \ref{corollary4} generalise one of the results of \cite{K8} in which the the conclusions above were established in the special case $n=N$ and $\H(P)=|P|^2$, corresponding to the $\infty$-Laplacian \eqref{1.5}. 

Informally, the idea of the proofs of Theorem \ref{theorem3} and Proposition \ref{corollary4} is as follows. An inspection of \eqref{1.1} shows that it can be contracted as
\begin{equation}\label{1.12}
\H_P(\D u)\,\D\big(\H(\D u)\big) \, +\, \H(\D u)\big[\H_P(\D u)\big]^\bot \Div\big(\H_{P}(\D u)\big)\, =\,0,
\end{equation}
that is as
\[
 \sum_{i=1}^n \Bigg(\H_{P_{\al i}}(\D u)\, \D_i\big(\H(\D u)\big) \, +\, \H(\D u)\sum_{\ga=1}^N\big[\H_P(\D u)\big]_{\al \ga}^\bot \D_i\big(\H_{P_{\ga i}}(\D u)\big)\Bigg)\, =\,0,
\]
for $\al=1,...,N$. Hence, if we could prove for some $C\geq 0$ existence of solutions to the differential inclusion
\[
\D u(x)\, \in \, \Big\{P\in \R^{N\by n}\Big|\, \H(P)=C, \ \rk(\H_P(P))=N\Big\}, \ x\in \Om,
\]
we would obtain a solution to \eqref{1.12} (i.e.\ to \eqref{1.1}) because then $\D\big(\H(\D u)\big)\equiv 0$ and also $[\H_P(\D u)]^\bot$ since if $\H_P(\D u)$ has full rank the orthogonal complement of its range trivialises. However, the preceding arguments make sense only for classical or strong solutions. The proof of Theorem \ref{theorem3} has two main parts. We first use the Baire Category method to establish the existence of $W^{1,\infty}$ strong a.e.\ solutions to  system \eqref{1.11} of Corollary \ref{corollary4} and then we utilise the machinery of $\mD$-solutions to make the above heuristics rigorous.

Our main ingredient for the solvability of \eqref{1.11} is a result of independent interest about the solvability of the following fully non-linear system, usually referred to as the \emph{prescribed singular value problem}:
\begin{equation}\label{1.13}
\left\{
\begin{array}{rl}
\la_i(\D u)=1, & \text{ a.e.\ on }\Om, \, i=1,...,n\wedge N, \smallskip\\
u\,=\,g, & \text{ on }\p\Om.
 \end{array}
 \right.
\end{equation}
In \eqref{1.13}, $\{\la_1(\D u),...,\la_{n\wedge N}(\D u)\}$ denotes the set of singular values of the matrix $\D u$, namely the eigenvalues of the matrix $(\D u^{\!\top} \D u)^{1/2}$ in increasing order and we symbolise $n\wedge N = \min\{n,N\}$. 

Systems of PDEs involving singular values, mostly related to non-convex problems in Calculus of Variations, have been considered by several authors (cf.\ for instance \cite{Cr, BCR, DR, DPR}). In particular the problem of finding sufficient conditions on the boundary datum $g$ in order to get existence of solutions to problem \eqref{1.13} has been addressed, in the special case  $n=N$, in  \cite{DM1, DM2, DT}. To the best of our knowledge no results are known for the case $n\neq N$. Accordingly, we establish the following result.

\begin{theorem}\label{mainAff}
	\label{mainTH}
	Let $\Omega \sub \R^n$ be an open set. Assume that $g \in \text{\emph{Aff}}_{\text{\emph{pw}}}(\overline{\Omega}, \mathbb{R}^N)$ is such that $\la_{n\wedge N}(\D g)<1$, a.e.\ on $\Omega$. Then, there exists an infinite set of solutions $u\in W_g^{1,\infty}(\Omega,\mathbb{R}^N)$ to the system (\ref{1.13}).
	\end{theorem}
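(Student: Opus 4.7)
My plan is to apply the Baire category method of Dacorogna--Marcellini (see \cite{D, DM2, DP}) to the first order differential inclusion
\[
\D u\in E:=\big\{P\in\R^{N\by n}:\la_i(P)=1,\ i=1,\dots,n\wedge N\big\},\quad u=g\ \text{on }\p\Om.
\]
Since $g\in\text{Aff}_{\text{pw}}(\overline\Om,\R^N)$, by working on each affine piece of $g$ separately (and concatenating solutions that share the trace across interfaces) I may assume that $g$ is affine on a single bounded open $\Om$ with $\D g=A$ and $\|A\|_{\text{op}}=\la_{n\wedge N}(A)<1$. I treat in detail the case $N\leq n$; the case $n<N$ is symmetric, replacing ``extra columns'' by ``extra rows'' in the singular value decomposition used below.

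The admissible class is the $L^\infty$-closure
\[
\B:=\overline{\big\{v\in W^{1,\infty}_g(\Om,\R^N)\cap\text{Aff}_{\text{pw}}(\overline\Om,\R^N):\la_{n\wedge N}(\D v)<1\ \text{a.e.}\big\}}^{\,L^\infty},
\]
a complete metric space whose elements are uniformly bounded in $W^{1,\infty}$ and satisfy $\la_{n\wedge N}(\D v)\leq 1$ a.e.\ (weak$^*$ closure of the operator-norm unit ball). The targeted solution set is
\[
\B_0:=\{v\in\B:\la_i(\D v)=1\ \text{a.e.},\ i=1,\dots,n\wedge N\},
\]
and under the constraint $\la_{n\wedge N}(\D v)\leq 1$ inherited from $\B$, membership in $\B_0$ is equivalent to the vanishing of the non-negative functional $J(v):=\int_\Om(n\wedge N-\|\D v\|_*)\,dx$, where $\|\cdot\|_*$ is the nuclear (trace) norm. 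Convexity of $\|\cdot\|_*$ renders $v\mapsto\int_\Om\|\D v\|_*\,dx$ weakly$^*$ lower semicontinuous on bounded subsets of $W^{1,\infty}$, so $J$ is upper semicontinuous in the $L^\infty$ topology on $\B$ and $\B_0=\bigcap_{k\in\N}\{v\in\B:J(v)<1/k\}$ is a $G_\delta$ subset. It then suffices to show each $\{J<1/k\}$ is dense in $\B$: by Baire's theorem $\B_0$ will be residual, hence uncountable and non-empty.

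Density reduces to a \emph{rank-one approximation lemma}: given $v\in\B$ affine on some $\omega\sub\Om$ with $\D v=A$, $\la_{n\wedge N}(A)=\sigma<1$, and $\e>0$, I must produce a piecewise affine $w\in W^{1,\infty}_v(\omega,\R^N)$ with $\|w-v\|_{L^\infty(\omega)}<\e$, $\la_{n\wedge N}(\D w)<1$ a.e., and the strict improvement $J|_\omega(w)\leq J|_\omega(v)-c\,(1-\sigma^2)|\omega|$ for an absolute constant $c>0$. To build it, diagonalise $A=U\Sigma V^\top$ with $U\in O(N)$, $V\in O(n)$, $\Sigma=[\,\text{diag}(\sigma_1,\dots,\sigma_N)\,|\,0_{N\by(n-N)}\,]$, pick an index $i$ with $\sigma_i<1$ and a parameter $t\in(0,\sqrt{1-\sigma_i^2})$, and form the rank-one splitting
\[
A\,=\,\tfrac12\big(A+t(Ue_i)\ot(Vf_{N+1})\big)+\tfrac12\big(A-t(Ue_i)\ot(Vf_{N+1})\big)=:\tfrac12(B_++B_-).
\]
A direct computation using $\Sigma f_{N+1}=0$ shows that $B_\pm$ have singular values $\sigma_j$ for $j\neq i$ and $\sqrt{\sigma_i^2+t^2}\in(\sigma_i,1)$ for $j=i$; in particular $\la_{n\wedge N}(B_\pm)<1$ and $\|B_\pm\|_*>\|A\|_*$ by a fixed amount. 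The standard Dacorogna--Marcellini building block (a one-dimensional oscillation between $B_\pm$ along the direction $Vf_{N+1}$ inside a thin slab, joined to the affine $v$ by piecewise affine caps near $\p\omega$) then yields $w$.

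The main obstacle is the non-square geometry $n\neq N$. When $n=N$ the column index $N+1$ does not exist and the argument above collapses; this is why the proof in \cite{DM1,DM2} uses a far more intricate double lamination. When $n>N$, by contrast, $\ker A^\top\sub\R^n$ is non-trivial, and any $f\in\ker A^\top$ provides a rank-one perturbation $(Ue_i)\ot(Vf)$ which preserves every singular value except the $i$-th one---exactly the geometric tool missing in the equidimensional case. Uniformity of the improvement $c(1-\sigma^2)$ across iterations is enforced by choosing $t$ proportional to the current spectral gap $1-\la_{n\wedge N}$, while concatenation of the independently constructed laminates across the original affine pieces of $g$ is automatic, since each laminate matches its affine boundary datum exactly. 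A standard diagonal argument then drives the iteration to a limit in $\B_0$, completing the Baire category proof and delivering the claimed infinite set of solutions.
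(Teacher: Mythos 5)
Your argument is correct in outline and takes a genuinely different route from the paper. The paper's proof is a black-box application of the abstract Baire category existence theorem of Dacorogna--Pisante (\cite{DP}, restated here as Theorem \ref{Relaxation property theorem for RcoE}): it first computes $\Rco E = \{\lambda_{n\wedge N}\leq 1\}$ (Theorem \ref{th:rco}), which it obtains not by a new lamination but by reducing, via singular value decomposition and a zero-padding trick, to the \emph{square} case $n=N$ already in \cite[Thm.~7.17]{D}; it then verifies the approximation property with the trivially rescaled sets $E_\delta=\{\lambda_i=1-\delta\}$. You instead unroll the Baire argument by hand and replace the known square-case lamination by a new one tailored to $n\neq N$: the key and correct observation is that for $n>N$ any $f\in\ker(\D g^\top)$ yields a rank-one direction $(Ue_i)\otimes(Vf)$ that moves only the $i$-th singular value (to $\sqrt{\sigma_i^2+t^2}$), because $\Sigma\Sigma^\top$ picks up the single block $t^2 e_ie_i^\top$ — a mechanism unavailable when $n=N$, which is precisely why the equidimensional proof needs a double lamination. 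In effect you are re-proving, in this special non-square geometry, the piece of the abstract theorem the paper simply cites, while the paper gets the non-square case ``for free'' from the square one via padding. Each approach buys something: yours is self-contained and exposes the geometric reason why $n\neq N$ is \emph{easier}, whereas the paper's is shorter and additionally records $\Co E=\Pco E=\Rco E$ as a standalone result. Two small points you should tighten if you develop this: (i) the per-step gain $c(1-\sigma^2)|\omega|$ vanishes as $\sigma\to1$, so the density of $\{J<1/k\}$ is not a single application of the improvement lemma but a finite nested iteration (choosing $t$ a fixed fraction of $\sqrt{1-\sigma_i^2}$ makes the defects $1-\sigma_i$ decay geometrically, so finitely many steps suffice for any $1/k$); and (ii) the ``caps'' in the laminate must be checked to have gradient in the open convex set $\{\lambda_{n\wedge N}<1\}$, which follows from the standard in-approximation construction because the cap gradients stay in a small neighbourhood of the rank-one segment $[B_-,B_+]$ — this is implicitly where the convexity of $\Rco E$ enters in your argument too, even though you never compute $\Rco E$ explicitly.
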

The proof of the previous theorem can be obtained as an application of the general existence theory for differential inclusion via the Baire Category method (cf.\ \cite{DP}), in the same spirit as in the $N=n$ case. It relies on the characterisation of the rank-one convex envelope of the set of matrices
\[
E\, := \, \Big\{Q \in \R^{N\times n}\, :\, \la_i(Q)=1,\ i=1,\dots, N\wedge n \Big\},
\]
which is
\[
\Rco E \,=\, \Big\{Q \in \R^{N\times n}\, : \, \lambda_{N\wedge n}(Q)\leq 1 \Big\},
\]
as proved in Theorem \ref{th:rco}.

In order to address the question of the existence of solutions to the problem \eqref{1.11} with $g\in W^{1,\infty}(\Omega,\mathbb{R}^N)$, some comments on the admissible regularity of the boundary datum in Theorem \ref{mainAff} are in order. Indeed, the piecewise affinity of the datum $g$ can be weakened to Lipschitz continuity if we restrict slightly the bound on the norm $\la_{n\wedge N}(\D g)$. This result, precisely stated in Corollary \ref{cor:main} that follows, is a simple consequence of the convexity of the rank-one convex hull of $E$ (cf.\ Theorem \ref{th:rco}) and of the approximation result proved in \cite[Corollary 10.21]{DM2}. 

\begin{corollary}
	\label{cor:main}
	Let $\Omega \sub \R^n$ be an open set. Assume that $g \in W^{1,\infty}(\Omega,\mathbb{R}^N)$ is such that for some $\delta>0$ we have $\la_{n\wedge N}(\D g)\leq 1-\delta $, a.e. on $\Omega$. Then there exists a infinite set of  solutions $u\in W_g^{1,\infty}(\Omega,\mathbb{R}^N)$ to the system (\ref{1.13}).
\end{corollary}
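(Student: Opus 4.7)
The plan is to reduce the general Lipschitz datum $g$ to the piecewise affine setting of Theorem \ref{mainAff} via an approximation argument, exploiting the crucial observation that $\Rco E$ is in fact \emph{convex}, not merely rank-one convex.

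First, I would recall from Theorem \ref{th:rco} that
\[
\Rco E \,=\, \Big\{Q \in \R^{N\by n}\, :\, \la_{n\wedge N}(Q)\leq 1\Big\},
\]
and observe that since $\la_{n\wedge N}(Q)$ is the largest singular value of $Q$, it coincides with the operator norm of $Q$. Hence $\Rco E$ is simply the closed unit ball of the operator norm on $\R^{N\by n}$, which is a compact convex set. Under the hypothesis $\la_{n\wedge N}(\D g)\leq 1-\delta$ a.e., the gradient $\D g$ takes values in the compact convex set
\[
K_\delta \,:=\, \Big\{Q \in \R^{N\by n}\, :\, \la_{n\wedge N}(Q)\leq 1-\delta\Big\},
\]
which is contained in the interior of $\Rco E$.

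Next, I would invoke \cite[Corollary 10.21]{DM2}: a Lipschitz map whose gradient lies a.e.\ in a compact convex set $K\sub\R^{N\by n}$ may be approximated, with matching boundary values on $\p\Om$, by piecewise affine maps whose gradients also lie a.e.\ in $K$. Applied to $g$ and $K_\delta$, this produces a piecewise affine map $\tilde g\in W^{1,\infty}(\Om,\R^N)$ with $\tilde g = g$ on $\p\Om$ and $\la_{n\wedge N}(\D\tilde g)\leq 1-\delta<1$ a.e.\ on $\Om$.

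Finally, since $\tilde g$ is piecewise affine and satisfies the strict inequality $\la_{n\wedge N}(\D\tilde g)<1$ a.e., Theorem \ref{mainAff} provides an infinite set of solutions $u\in W^{1,\infty}_{\tilde g}(\Om,\R^N)$ to the singular value system \eqref{1.13}. Because $\tilde g = g$ on $\p\Om$, every such $u$ in fact lies in $W^{1,\infty}_g(\Om,\R^N)$, giving the claim. The only real obstacle is to apply \cite[Corollary 10.21]{DM2} in exactly the form needed; this is precisely the step at which genuine convexity of $\Rco E$ (rather than mere rank-one convexity) is indispensable, since the Dacorogna--Marcellini approximation theorem requires the target set to be convex.
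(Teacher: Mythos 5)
Your proof is correct and takes essentially the same route as the paper: it hinges on the convexity of $\Rco E$ established in Theorem \ref{th:rco} and on the Dacorogna--Marcellini approximation result \cite[Corollary 10.21]{DM2} to reduce the $W^{1,\infty}$ datum to the piecewise affine case already covered by Theorem \ref{mainAff}. The paper compresses exactly this reduction into an appeal to Remark \ref{rem:int}, so the two arguments differ only in the level of detail.
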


The rest of the paper is organised as follows. In the next section we recall some known results about Young measures valued into spheres. In Section \ref{section3} we provide the proof of the existence of solutions for the prescribed singular value problem and in the last section we prove existence and geometric properties of $\mathcal D$-solutions to the problem \eqref{1.10}.

\section{Young measures valued into spheres} \label{section2}

Here we collect some basic material taken from \cite{K8} which can be found in different guises and in greater generality e.g.\ in \cite{CFV, FG}. Let $E\sub \R^n$ be measurable and consider the $L^1$ space of strongly measurable maps valued in the continuous functions over the sphere $\smash{\overline{\R}}^{N\by n^2}_s$ (for details on these spaces see e.g.\ \cite{Ed, FL}):
\[
L^1\big( E, C\big(\smash{\overline{\R}}^{N\by n^2}_s\big)\big).
\]
The Banach space $C\big(\smash{\overline{\R}}^{N\by n^2}_s\big)$ will be endowed with the standard supremum norm. The above $L^1$ space consists of Carath\'eodory functions $\Phi : E \by \smash{\overline{\R}}^{N\by n^2}_s \larrow \R$  for which 
\[
\| \Phi \|_{L^1( E, C(\smash{\overline{\R}}^{N\by n^2}_s))}\, =\, \int_E \big\|\Phi(x,\cdot)\big\|_{C(\smash{\overline{\R}}^{N\by n^2}_s)}\, dx\, <\, \infty.
\]
The dual of this (separable) Banach space is
\[
L^\infty_{w^*}\big( E, \mM\big(\smash{\overline{\R}}^{N\by n^2}_s\big)\big)\, =\, L^1\big( E, \mM\big(\smash{\overline{\R}}^{N\by n^2}_s\big)\big)^* .
\]
The dual space above consists of measure-valued maps $x \mapsto \vartheta(x) $ which are weakly* measurable, that is, for any fixed $\Psi \in C\big(\smash{\overline{\R}}^{N\by n^2}_s\big)$, the function 
\[
E\ni\ x\lmapsto \int_{\smash{\overline{\R}}^{N\by n^2}_s}\Psi(\X)\, d[\vartheta(x)](\X) \ \in \R
\]
is measurable. The norm of the space is
\[
\| \vartheta \|_{L^\infty_{w^*} ( E,\mM (\smash{\overline{\R}}^{N\by n^2}_s) )}\, =\, \underset{x\in E}{\ess\,\sup}\, \left\|\vartheta(x) \right\|
\]
where ``$\|\cdot\|$" symbolises the total variation on the real (signed) Radon measures. The closed unit ball of $L^\infty_{w^*}$ is sequentially weakly* compact and the duality pairing
\begin{equation}\label{dp}
\langle\cdot,\cdot\rangle\ :\ \ \ L^\infty_{w^*}\big( E, \mM\big(\smash{\overline{\R}}^{N\by n^2}_s\big)\big) \by L^1\big( E, C\big(\smash{\overline{\R}}^{N\by n^2}_s\big)\big) \larrow \R
\end{equation}
is given by
\[
\langle \vartheta, \Phi \rangle\, :=\, \int_E \int_{\smash{\overline{\R}}^{N\by n^2}_s} \Phi(x,\X)\, d[\vartheta(x)] (\X)\, dx.
\]

\noi \textbf{Definition} (Young Measures). {\it The subset of the unit sphere of $L^\infty_{w^*}$ consisting of probability-valued maps is the set of Young measures from $E$ into the compactification $\smash{\overline{\R}}^{N\by n^2}_s$:}
\[
\mY\big(E,\smash{\overline{\R}}^{N\by n^2}_s\big)\, :=\, \Big\{ \vartheta\, \in \, L^\infty_{w^*}\big( E, \mM\big(\smash{\overline{\R}}^{N\by n^2}_s\big)\big)\, \Big| \, \vartheta(x) \in \mP\big(\smash{\overline{\R}}^{N\by n^2}_s\big),\text{ a.e. }x\in E\Big\}.
\]

We finally note the following well known facts (for their proofs see e.g.\ \cite{FG}): 
\smallskip

\noi (a) \emph{The set of Young measures above is convex and sequentially weakly* compact.}  

\noi (b) \emph{Every measurable map $v : E\sub \R^n \larrow \R^{N\by n^2}_s$ induces a Young measure $\de_v$ given by $\de_v(x):= \de_{v(x)}$.} 

\noi (c) \emph{Let $v^m,v^\infty : E\sub \R^n\larrow \R^{N\by n^2}_s$ be measurable, $m\in \N$. Up to the passage to subsequences, we have}
\[
v^m \larrow v^\infty \ \text{ a.e.\ on }E\ \ \ \ \Longleftrightarrow \ \ \ \ \de_{v^{m}} \weakstar \de_{v^\infty} \text{ in }\mY\big(E,\smash{\overline{\R}}^{N\by n^2}_s\big).
\]

\section{The prescribed singular value problem  
}  \label{section3}

In this section we prove Theorem \ref{mainAff} and Corollary \ref{cor:main}. To this aim we start by recalling for the convenience of the reader some well known results about generalised convex hulls of sets of matrices (for further details we refer to the books \cite{DM2} and \cite{D}). 

For $Q\in\mathbb{R}^{N\times n}$ we set
\[
T\left( Q \right) \, :=\, \Big(
Q,\, \mathrm{adj}_{2}Q,\, \ldots,\, ,\mathrm{adj}_{N\wedge n}Q\Big)\,
\in\mathbb{R}^{\tau(N,n)},
\]
where $\mathrm{adj}_{s}Q$ stands for the matrix of all $s\times
s$ subdeterminants of the matrix $Q$, $1\leq s\leq N\wedge
n=\min\left\{  N,n\right\}  $ and 

\[
\tau\left(  N,n\right)  \,:=\, \underset{s=1}{\overset{N\wedge
n}{\sum}}\binom {N}{s}\binom{n}{s}\ , \ \ \ \  \ \binom{N}{s}
\, =\, \frac{N!}{s!\left(  N-s\right)  !}.
\]

\begin{definition}\label{convfunctions}
Consider a function $f:\mathbb{R}^{N\times n}\rightarrow\mathbb{R}\cup\left\{  +\infty\right\}$.
\begin{enumerate}
\item $f$ is said to be \emph{polyconvex} if there exists a convex function
$g:\mathbb{R}^{\tau(N,n)}\to\mathbb{R}\cup\left\{
+\infty\right\}$ such that
$f(Q)=g(T(Q))$.
\item $f$ is said to be \emph{rank one convex} if
\[
f\big(  \lambda Q+(1-\lambda)R\big) \, \leq \, \lambda\,f\left(
Q\right) \,+\, \left(  1-\lambda\right) f\left( R\right)
\]
for every $\lambda\in\left[  0,1\right] $ and every
$Q,R \in\mathbb{R}^{N\times n}$ with
$\rk(R-Q)  =1$.
\end{enumerate}
\end{definition}

It is well known that if a function is polyconvex, then it is rank one convex.
Next we recall the corresponding notions of convexity for sets.

\begin{definition}\label{definitiongeneralizedconvexities}
Let $E$ be a subset of $\mathbb{R}^{N\times n}$.
\begin{enumerate}
\item
We say that $E$ is
\emph{polyconvex} if there exists a convex set $K\sub \mathbb{R}^{\tau(N,n)}$ such that
$\left\{Q\in\mathbb{R}^{N\times n}: T(Q)\in K\right\}=E.$

\item  We say that $E$ is
\emph{rank one convex} if for every $\lambda\in[0,1]$ and
for every $Q,R\in E$ such that $\rk(Q-R)=1$, then
$\lambda Q+(1-\lambda)R\in E.$
\end{enumerate}
\end{definition}

\begin{definition}\label{convexhulls}
The \emph{polyconvex} and \emph{rank one convex hulls} of a set
$E\sub \mathbb{R}^{N\times n}$ are, respectively, the smallest
polyconvex and rank one convex sets containing $E$ and are, respectively, denoted by
$\operatorname*{Pco}E$ and $\operatorname*{Rco}E$.
\end{definition}

Obviously one has the following inclusions:
$E\subseteq \operatorname*{Rco}E \subseteq \operatorname*{Pco}E\subseteq\operatorname*{co}E,$ where
$\operatorname*{co}E$ denotes the convex hull of $E$. Let us also recall the next characterisation of the rank one convex hull:
 \begin{equation}
 \label{Rico}
 \displaystyle{\operatorname*{Rco}\, E\, =\,
\bigcup_{i\in\mathbb{N}}\mathrm{R}_i\mathrm{co}\, E},
\end{equation} 
where $\mathrm{R}_0\mathrm{co}E=E$ and
$$
\mathrm{R}_{i+1}\mathrm{co}\, E\, =\, \left\{ Q \in\mathbb{R}^{N\times n}:\
\begin{array}
[c]{c}\vspace{0.2cm} Q=\lambda A+(1-\lambda) B,\ \lambda\in[0,1],\\
A,B\in \mathrm{R}_i\mathrm{co}\, E,\ \rk(A-B)\leq 1
\end{array}\right\},\ i\ge 0.
$$

It is well known that, for $E\sub \mathbb{R}^{N\times n}$, (cfr. \cite[Proposition 2.36]{D})
\begin{equation}
\label{coE}
\operatorname*{co}E \,=\, \Big\{Q\in\mathbb{R}^{N\times n}\,:\, f(Q) \leq0,\ \text{for every convex function }
f\in{\mathcal{F}}^{E}_\infty\Big\}
\end{equation}
where
$$
{\mathcal{F}}^{E}_\infty   \, =\, \Big\{  f:\mathbb{R}^{N\times n}
\rightarrow\mathbb{R}\cup\left\{ +\infty\right\}
\, : \,\left.  f\right\vert _{E}\leq0\Big\} \,.
$$

Analogous representations to $(\ref{coE})$ can be obtained in the
polyconvex and rank one convex cases:
\begin{align*}
&\operatorname*{Pco}E \, =\, \Big\{ Q \in\mathbb{R}^{N\times n}:
f(Q) \leq0,\ \text{for every polyconvex function }
f\in{\mathcal{F}}^{E}_\infty\Big\}, 
\\
&\operatorname*{Rco}E \,=\, \Big\{ Q \in\mathbb{R}^{N\times n}:
f(Q) \leq0,\ \text{for every rank one convex function }
f\in{\mathcal{F}}^{E}_\infty\Big\}.
\end{align*}

For a matrix $A\in \R^{N\times n}$, we denote by $\lambda_i(A)$, with $0\leq \lambda_i(A)\leq \lambda_{i+1}(A), i=1,\dots ,N\wedge n$, the singular values of $A$, that is, the eigenvalues of the matrix $(A^\top A)^{\frac 12}\in \R^{n\times n}$. The following singular value decomposition theorem can be  deduced from \cite[Theorem 2.6.3]{HJ}.
 
 \begin{theorem}\label{decomposition}
 Let $A\in \R^{N\times n}$. Then there exist $U\in \R^{N\times N}, V\in \R^{n\times n}$ unitary matrices and $D\in \R^{N\times n}$ rectangular diagonal matrix such that $A=UDV^t$.
 \end{theorem}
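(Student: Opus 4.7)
The plan is to reduce the statement to the spectral theorem for symmetric matrices applied to $A^\top A \in \R^{n\times n}$, which is symmetric and positive semi-definite. By the spectral theorem there exists an orthonormal basis $\{v_1,\dots,v_n\}$ of $\R^n$ consisting of eigenvectors of $A^\top A$, with associated eigenvalues $\sigma_1^2\geq\sigma_2^2\geq\cdots\geq\sigma_n^2\geq 0$. Writing $r:=\rk(A)=\rk(A^\top A)$, one has $\sigma_1,\dots,\sigma_r>0$ and $\sigma_{r+1}=\cdots=\sigma_n=0$, with $r\leq N\wedge n$.

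The next step is to construct $U$. For $i=1,\dots,r$ define $u_i:=\sigma_i^{-1}Av_i \in \R^N$. A direct computation
\[
\langle u_i,u_j\rangle \,=\, (\sigma_i\sigma_j)^{-1}\, v_i^\top A^\top A v_j \,=\, (\sigma_i\sigma_j)^{-1}\sigma_j^2\, \de_{ij} \,=\, \de_{ij}
\]
shows that $\{u_1,\dots,u_r\}$ is an orthonormal system in $\R^N$. Since $r\leq N$, one may complete it (e.g.\ via Gram--Schmidt) to an orthonormal basis $\{u_1,\dots,u_N\}$ of $\R^N$, and then assemble the matrices $U\in\R^{N\times N}$ and $V\in\R^{n\times n}$ whose columns are the $u_i$'s and $v_j$'s respectively. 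Let $D\in\R^{N\times n}$ be the rectangular diagonal matrix with $D_{ii}=\sigma_i$ for $i=1,\dots,N\wedge n$ and zero entries elsewhere.

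Finally, the identity $A=UDV^t$ would be checked by testing on the basis $\{v_i\}$ of $\R^n$: for $i\leq r$ one has $UDV^t v_i = UDe_i = \sigma_i u_i = Av_i$ by the very definition of $u_i$; for $i>r$ we have $UDe_i=0$ while $Av_i=0$ as well, since $\|Av_i\|^2=v_i^\top A^\top A v_i=0$. Since the $v_i$'s span $\R^n$, the desired factorisation follows.

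The only mild subtlety is the dimensional bookkeeping, namely guaranteeing $r\leq N$ so that the orthonormal system $\{u_1,\dots,u_r\}$ admits an orthonormal completion inside $\R^N$; this is immediate from $r=\rk(A)\leq\min\{N,n\}\leq N$. No substantive obstacle is expected: this is a classical computation, which is precisely why the authors content themselves with a reference to \cite{HJ} rather than including the argument.
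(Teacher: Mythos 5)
Your proof is correct and complete; it is the standard derivation of the singular value decomposition by diagonalising the symmetric positive semi-definite matrix $A^\top A$, extending the resulting orthonormal system $\{u_1,\dots,u_r\}$ to a full orthonormal basis of $\R^N$, and verifying the factorisation on the eigenbasis $\{v_i\}$. The paper itself gives no proof, simply deferring to \cite[Theorem 2.6.3]{HJ}, so there is no divergence of approach to report; the only cosmetic difference is that you order the singular values decreasingly whereas the paper's convention elsewhere is increasing, which is immaterial for the existence statement.
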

 
Using the singular values we can define the unitarily invariant norms on $\R^{N\times n}$ (known as Ky Fan k-norms (cf.\  \cite[Section 7.4.8]{HJ}) as
\[
\ \ \ \|A\|_k \,:=\,  \sum_{i=0}^{k-1}\lambda_{n-i}(A), \ \ \ k \in \{ 1,2,\dots, n \wedge N\}.
\]
Choosing $k=1$ we obtain the useful property of the maximum singular value $\lambda_{n \wedge N}(A)$ of being a norm in $\R^{N\times n}$.

As done in \cite[Theorem 7.16]{DM1} for the case $n=N$, in the next theorem we characterise the rank-one convex hull of the set 
\begin{equation}\label{definition_E}
E \, =\, \Big\{Q \in \R^{N\times n}\, :\ \lambda_i(Q)=1,\, i=1, \dots, N\wedge n \Big\}.
\end{equation}
It turns out that, in this particular case where the singular values are all equal, $\Rco E$ is indeed a convex set, as $\Co E=\Rco E=\Pco E$.

\begin{theorem}\label{th:rco}
Let $E$ be defined by (\ref{definition_E}). Then 
$\Co E=\Pco E=\Rco E$, 
\[
\Rco E \,= \, \Big\{Q \in \R^{N\times n}: \lambda_{N\wedge n}(Q)\leq 1 \Big\}
\]
and
\[
	\Int\Rco E \,=\, \Big\{Q \in \R^{N\times n}: \lambda_{N\wedge n}(Q) < 1 \Big\}.
\]

\end{theorem}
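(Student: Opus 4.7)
Set $K := \{Q \in \R^{N\times n} : \la_{N\wedge n}(Q) \leq 1\}$. The plan is to prove the two inclusions $\Co E \sub K$ and $K \sub \Rco E$; combined with the standard chain $\Rco E \sub \Pco E \sub \Co E$ these force $\Co E = \Pco E = \Rco E = K$. The inclusion $\Co E \sub K$ is immediate: $E \sub K$ trivially, while $K$ is convex because, as recalled just before the statement, $\la_{N\wedge n}$ is the Ky Fan $1$-norm on $\R^{N\times n}$ and $K$ is its closed unit ball. Once $K = \Rco E$ is established, the characterisation of the interior is the standard fact that the interior of the closed unit ball in a normed space is the corresponding open unit ball.

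The content of the theorem is the reverse inclusion $K \sub \Rco E$, and the plan is to reduce to rectangular diagonal matrices via Theorem \ref{decomposition}. Given $Q \in K$, write $Q = UDV^\top$ with $U \in O(N)$, $V \in O(n)$ and $D \in \R^{N\times n}$ a rectangular diagonal matrix whose diagonal entries are $\la_1(Q), \dots, \la_{N\wedge n}(Q) \in [0,1]$. The linear bijection $\Phi : B \mapsto UBV^\top$ preserves singular values, so $\Phi(E) = E$, and preserves the rank of differences since $\Phi(A) - \Phi(B) = U(A-B)V^\top$. By the inductive formula \eqref{Rico}, $\Phi$ therefore maps each $\mathrm{R}_k\Co E$ onto itself, and hence $\Rco E$ onto itself. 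It thus suffices to show that every rectangular diagonal matrix $D$ with entries $d_1, \dots, d_{N\wedge n} \in [-1,1]$ belongs to $\Rco E$.

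For this, I would prove by induction on $k \in \{0, 1, \dots, N\wedge n\}$ that every rectangular diagonal matrix with $k$ diagonal entries arbitrary in $[-1,1]$ and the remaining $N\wedge n - k$ entries in $\{\pm 1\}$ belongs to $\mathrm{R}_k\Co E$. The base case $k=0$ holds because every such matrix $D_0$ satisfies $D_0 D_0^\top = I_N$ (when $N \leq n$) or $D_0^\top D_0 = I_n$ (when $N > n$), and so $D_0 \in E = \mathrm{R}_0\Co E$. For the inductive step, pick one index $i$ at which $d_i \in [-1,1]$ and write
\[
D \, =\, \tfrac{1+d_i}{2}\, D^{+} \, +\, \tfrac{1-d_i}{2}\, D^{-},
\]
where $D^{\pm}$ agrees with $D$ except that its $(i,i)$ entry is $\pm 1$. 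The matrices $D^{\pm}$ each have $k-1$ free entries, so $D^{\pm} \in \mathrm{R}_{k-1}\Co E$ by the inductive hypothesis, and $D^{+} - D^{-}$ has rank $1$ (a single nonzero entry at $(i,i)$). Hence $D \in \mathrm{R}_k\Co E$. At $k = N\wedge n$ this covers every $D$ with $d_i \in [-1,1]$, completing the reduction.

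The main obstacle is modest but essential: verifying that the SVD reduction commutes with the construction \eqref{Rico} of $\Rco E$, i.e.\ the invariance of each $\mathrm{R}_k\Co E$ under the map $\Phi$. After that, the inductive lamination argument mirrors the one of \cite[Theorem 7.16]{DM1} for the square case $n = N$, the only modification being the presence of the extra $n - N$ (or $N - n$) zero columns (rows), which play no role because the rank-one moves all occur on the diagonal entries.
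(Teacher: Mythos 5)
Your proof is correct and follows essentially the same route as the paper: $\la_{N\wedge n}$ is the Ky Fan $1$-norm, so $K$ is convex and $\Co E \sub K$, while the reverse inclusion $K\sub\Rco E$ is reduced by the singular value decomposition to rectangular diagonal matrices and then established by lamination on the diagonal entries. The only cosmetic differences are that you write out the lamination induction and the invariance of each $\mathrm{R}_k\mathrm{co}\, E$ under $B\mapsto UBV^\top$ explicitly (the paper cites \cite[Theorem 7.17]{D} for the lamination step), and you obtain the interior as the open unit ball of the Ky Fan norm rather than via the paper's direct perturbation argument.
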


\begin{proof}
Let $X :=\{Q \in \R^{N\times n}: \lambda_n(Q)\leq 1\}$. The claim will follow from the inclusions  $\Co E\sub X$ and $X\sub \Rco E$. The first one is a direct consequence of the convexity of the function $A\mapsto \lambda_n(A)$ (that is convex being a norm) and of the characterisation of the convex envelope of a set given by \eqref{coE}. 
To prove the second inclusion, by the decomposition  Theorem \ref{decomposition}, it is enough to prove that a general rectangular diagonal matrix $D\in X$ is in the rank-one convex envelope of the subset of rectangular diagonal matrices in $E$. The proof then is identical to that of \cite[Theorem 7.17]{D} where we have to consider the notation $\text{diag}(a_1,\dots,a_n)$ as a rectangular diagonal matrix made by a square diagonal block with entries $(a_1,\dots,a_n)$ and a $(N-n)\times n$ or $(n-N)\times N$ zero block accordingly with the case $N>n$ or $n>N$. 

The characterisation of the interior follows from the continuity of $A\mapsto \lambda_n(A)$  ($\lambda_n$  is a norm) and from the fact that if $Q \in \Int\Rco E$ then $\lambda_n(Q)\neq 1$. The last claim can be easily proved by contradiction,  assuming without loss of generality that $Q$ is diagonal and  perturbing it by considering $Q+\varepsilon Z$ where $Z\in \R^{N\times n}$ is  such that $(Z)_{n\wedge N,n\wedge N}=1$ and the other entries of $Z$ are null. Then $Q+\varepsilon Z$ would not belong to $\Rco E$, which is a contradiction. 
 \end{proof}

To prove Theorem \ref{mainAff} we can apply the general existence theorem of  \cite{DP} (note that similar existence theorems have been obtaind by Kirchheim in \cite{Ki1, Ki2} and by M{\"u}ller and {\v{S}}ver{\'a}k in \cite{MS1, MS2}). To this aim we need the notion of Approximation Property defined below.

\begin{definition}
[Approximation Property]\label{Approximation property}Let $E\sub K\left(
E\right)  \sub \mathbb{R}^{N\times n}.$ The sets $E$ and $K\left(  E\right)
$ are said to have the\textit{\ }\emph{approximation property} if there exists
a family of closed sets $E_{\delta}$ and $K\left(  E_{\delta}\right)  $,
$\delta>0$, such that

(1) $E_{\delta}\sub K\left(  E_{\delta}\right)  \sub\operatorname*{int}%
K\left(  E\right)  $ for every $\delta>0;$

(2) for every $\epsilon>0$ there exists $\delta_{0}=\delta_{0}\left(
\epsilon\right)  >0$ such that $\operatorname*{dist}(Q,E)\leq\epsilon$ for
every $Q\in E_{\delta}$ and $\delta\in\left[  0,\delta_{0}\right]  $;

(3) if $Q\in\operatorname*{int}K\left(  E\right)  $ then $Q\in K\left(
E_{\delta}\right)  $ for every $\delta>0$ sufficiently small.
\end{definition}

We therefore have the following result (cf. \cite[Theorem 7 and 9]{DP} and also \cite[Section 10.2.1]{D}).

\begin{theorem}
\label{Relaxation property theorem for RcoE}Let $E\sub\mathbb{R}^{N\times
n}$ be compact and suppose that there exist $E_{\delta}$ and $K\left(  E_{\delta}\right)  =\operatorname*{Rco}E_{\delta}$  satisfying the  approximation property with $K= \Int\Rco E$. Let $\Omega\sub\mathbb{R}^{n}$ be open and $g\in \text{\emph{Aff}}_{\text{\emph{piec}}}\left(  \overline{\Omega}, \mathbb{R}^{N}\right)$ be
such that
\[
\ \ \ \ \ \D g\left(  x\right) \,  \in \, E\, \cup\, \Int\Rco E \text{, \ a.e. }x\in \Omega.
\]
Then there exists (a dense set of) $u\in W_g^{1,\infty}\left(
\Omega,\mathbb{R}^{N}\right)  $ such that
\[
\D u\left(  x\right)  \in E\text{, \ a.e. }x\in \Omega.
\]

\end{theorem}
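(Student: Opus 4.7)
The plan is to apply the abstract Baire category method on a complete metric space of piecewise affine competitors, following the strategy developed in \cite{DM2, DP}. First, I would introduce
\[
V \,:=\, \bigl\{v\in \text{Aff}_{\text{piec}}(\overline\Omega,\mathbb{R}^N)\, :\, v=g \text{ on }\p\Omega,\ \D v(x)\in E\cup\Int\Rco E \text{ a.e.}\bigr\},
\]
which is non-empty since $g\in V$. Because $E\cup\Int\Rco E \sub \Rco E$ and the latter is compact, every $v\in V$ is Lipschitz with a bound depending only on $E$. Let $\mathcal{V}$ denote the closure of $V$ with respect to the $C^0(\overline\Omega,\mathbb{R}^N)$ distance; then $\mathcal{V}$ is a complete metric space whose elements are uniformly Lipschitz and satisfy $u=g$ on $\p\Omega$.

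The heart of the matter is a perturbation lemma: there exists $c_0>0$, depending only on $\Rco E$, such that for every $v\in V$ and every $\e>0$ there is $\widetilde v\in V$ with $\|\widetilde v-v\|_{C^0(\overline\Omega)}<\e$ and
\[
\int_{\Omega}\dist\bigl(\D\widetilde v(x),E\bigr)\,dx \,\leq\, (1-c_0)\int_{\Omega}\dist\bigl(\D v(x),E\bigr)\,dx.
\]
To prove it, I would work piece by piece on each $\Omega_j \sub \Omega$ where $\D v\equiv Q_j$. If $Q_j\in E$ there is nothing to do; if $Q_j\in \Int\Rco E$, property (3) of the approximation property yields $\delta>0$ with $Q_j\in \Rco E_\delta$, and the iterative characterisation \eqref{Rico} represents $Q_j$ as a finite iterated rank-one convex combination of elements of $E_\delta$. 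The classical laminate construction (cf.\ \cite[Section 10.2]{DM2}) then produces a small-amplitude piecewise affine local perturbation $\widetilde v_j$ on $\Omega_j$, agreeing with $v$ on $\p\Omega_j$, whose gradient attains only values in $E_\delta$; property (1) ensures these values lie in $\Int\Rco E$, and property (2) forces them to lie within arbitrarily small distance $\e(\delta)$ of $E$. Gluing the $\widetilde v_j$ furnishes the desired $\widetilde v\in V$.

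With the perturbation lemma in hand, I would conclude by a standard Baire category argument. The gradient operator $\D:(\mathcal{V},\|\cdot\|_{C^0}) \to L^1(\Omega,\mathbb{R}^{N\by n})$ is of Baire class one because it is the pointwise limit of the continuous operators $v\mapsto \D(v*\rho_{1/k})$, with $\{\rho_\e\}$ a standard mollifier. Hence its set of continuity points $\mathcal{V}_\infty\sub \mathcal{V}$ is a dense $G_\delta$. Fix $u\in\mathcal{V}_\infty$; iterating the perturbation lemma finitely many times in a $C^0$-neighbourhood of $u$ yields a sequence $v_m\in V$ with $v_m\to u$ in $C^0$ and $\int_\Omega\dist(\D v_m,E)\,dx\to 0$. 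Continuity of $\D$ at $u$ gives $\D v_m\to \D u$ in $L^1$, hence pointwise a.e.\ along a subsequence. Since $\dist(\D v_m,E)\to 0$ in $L^1$ yields a further subsequence converging to $0$ a.e., we conclude that $\D u(x)\in E$ for a.e.\ $x\in\Omega$. Density of the solution set in $\mathcal{V}$ (and in particular near $g$) is immediate from density of $\mathcal{V}_\infty$.

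The main obstacle is the perturbation lemma, where one must simultaneously (i) reduce $\int \dist(\D v,E)\,dx$ by a definite fraction, (ii) keep the $C^0$-perturbation arbitrarily small, and (iii) preserve membership of $V$ — that is, ensure that the new piecewise affine map still has gradient in $E\cup \Int\Rco E$ and still agrees with $g$ on $\p\Omega$. The three axioms of the approximation property are tailored precisely for this triple constraint: (3) guarantees that any value in $\Int\Rco E$ admits a rank-one decomposition over $E_\delta$, (1) keeps the resulting laminates strictly inside $\Int\Rco E$ so that the intermediate piecewise affine maps remain in $V$, and (2) quantifies the gain towards $E$ in the decomposition.
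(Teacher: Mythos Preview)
The paper does not supply its own proof of this theorem: it is quoted as a known result, with the citation ``cf.\ \cite[Theorems 7 and 9]{DP} and also \cite[Section 10.2.1]{D}''. Your proposal is precisely the Baire category scheme of Dacorogna--Marcellini and Dacorogna--Pisante that those references contain, so in that sense you are reproducing the intended argument rather than offering an alternative.

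One quantitative point deserves tightening. Your perturbation lemma asserts a reduction of $\int_\Omega \dist(\D v,E)\,dx$ by a \emph{fixed} factor $(1-c_0)$ with $c_0$ depending only on $\Rco E$. This uniformity is not what the approximation property delivers directly, nor is it needed. What the construction actually gives is stronger on each piece and weaker globally: for a fixed piecewise affine $v$ with finitely many gradient values $Q_1,\dots,Q_m\in \Int\Rco E$, property (3) lets you pick a single $\delta$ with all $Q_j\in \Rco E_\delta$, and then the iterated rank-one laminate yields $\widetilde v\in V$ whose gradient lies in $E_\delta\cup \Rco E_\delta$, with the $\Rco E_\delta$ portion of arbitrarily small measure. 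By property (2), $\int_\Omega \dist(\D\widetilde v,E)\,dx$ can therefore be made smaller than any prescribed $\eta>0$ in a \emph{single} step (no iteration of the lemma is required). The standard formulation is thus: for every $v\in V$ and every $\e,\eta>0$ there is $\widetilde v\in V$ with $\|\widetilde v-v\|_{C^0}<\e$ and $\int_\Omega\dist(\D\widetilde v,E)\,dx<\eta$. With this version the Baire endgame is exactly as you wrote: at a continuity point $u$ of $\D$, approximate $u$ in $C^0$ by $v_m\in V$, perturb each $v_m$ to $\widetilde v_m\in V$ with $\int\dist(\D\widetilde v_m,E)<1/m$, and pass to the limit using $\D\widetilde v_m\to\D u$ in $L^1$.
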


\begin{remark}\label{rem:int}
	If the set $K$ is open, $g$ can be taken in
$C_{\text{\emph{piec}}}^{1}\left(  \overline{\Omega}, \mathbb{R}^{N}\right)  $ (cf. Corollary
10.15 or Theorem 10.16 in \cite{D}), with $\D g\left(
x\right)  \in E\cup K$. While if $K$ is open and convex, $g$ can be
taken in\ $W^{1,\infty}\left(  \Omega,\mathbb{R}^{N}\right)  $ provided
\[
\D g\left(  x\right)  \in C\text{, \ a.e. }x\in \Omega,
\]
where $C\sub K$ is compact (cf. Corollary 10.21 in \cite{DM1}).
\end{remark}

\noi \textbf{Proof of Theorem \ref{mainAff} and Corollary \ref{cor:main}.} We prove the approximation property with
\[
E_{\delta} \,=\, \Big\{Q \in \R^{N\times n}\, :\, \lambda_i(Q)=1-\delta, \, i=1,\dots, n\wedge N
\Big\}.
\]
Indeed it is easy to check that
\[
\begin{split}
\Rco E_{\delta}\, &=\, \Big\{Q \in \R^{N\times n}\, :\, \lambda_{n\wedge N}(Q)\leq 1-\delta 
\Big\} 
\\ 
& \sub \, \Big\{Q \in \R^{N\times n}\,:\, \lambda_{n\wedge N}(Q)< 1 \Big\}
\\
& =\, \Int \Rco E\,.
\end{split}
\]
To check the  second condition we observe that for any fixed $Q \in E_{\delta}$,
then up to a multiplication by unitary matrices (see Theorem \ref{decomposition}), we can assume that $Q=(1-\delta)\tilde{\mathrm{I}}$, where $\tilde{\mathrm{I}}$ is the rectangular identity  matrix belonging to $E$. Therefore $\|Q-\tilde{\mathrm{I}}\|\leq c\delta$. The third condition can be easily verified arguing as in the last part of the proof of Theorem \ref{th:rco}. We can therefore prove Theorem \ref{mainAff} by applying Theorem \ref{Relaxation property theorem for RcoE}. The claim of Corollary \ref{cor:main} can be proved by arguing as before and appealing to Remark \ref{rem:int}.
\qed

\section{$\mD$-solutions to the PDE system arising in vectorial Calculus of Variations in $L^\infty$} \label{section4}

In this section we establish the proofs of Theorem \ref{theorem3} and of Proposition \ref{corollary4} by utilising Corollary \ref{cor:main} which was proved in Section \ref{section3}. 

\BPP \ref{corollary4}. We begin by showing the next consequence of the solvability of the singular value problem.

\begin{claim} \label{claim} Given an open set $\Om\sub \R^n$ and $g\in W^{1,\infty}(\Om,\R^N)$ with $N\leq n$, for any constant $c>\|\D g\|_{L^\infty(\Om)}$ there exist (an infinite set of) maps $u : \Om \sub \R^n \larrow \R^N$ in $ W_g^{1,\infty}(\Om,\R^N)$ such that\footnote{One of the referees of this paper brought to our attention that the tensor $\D u\, \D u^{\!\top}$ is the so called Left-Cauchy-Green Tensor which appears in nonlinear elasticity.}
\begin{equation}\label{3.1}
\left\{
\begin{array}{rl}
\D u\, \D u^{\!\top}\, =\, c^2\mathrm{I},  & \text{ a.e.\ on }\Om,\\
u\,=\,g, \ \ & \text{ on }\p\Om.
 \end{array}
 \right.
\end{equation}
\end{claim}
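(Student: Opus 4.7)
The plan is to reduce Claim \ref{claim} directly to the singular value problem solved in Corollary \ref{cor:main} by means of two elementary reductions: an algebraic reformulation of the pointwise condition, and a rescaling of the unknown.

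First I would observe that, since $N\leq n$, the singular value decomposition of Theorem \ref{decomposition} gives a precise dictionary between the matrix equation $\D u\,\D u^{\!\top}=c^2\mathrm{I}$ and a condition on the singular values. Indeed, writing $\D u=UDV^{\!\top}$ with $U\in \R^{N\by N}$, $V\in\R^{n\by n}$ unitary and $D\in\R^{N\by n}$ rectangular diagonal with entries $\la_1(\D u),\dots,\la_N(\D u)$ on the diagonal, a direct computation gives
\[
\D u\,\D u^{\!\top}\,=\, U(DD^{\!\top})U^{\!\top}\, =\, U\,\text{diag}\big(\la_1(\D u)^2,\ldots,\la_N(\D u)^2\big)\,U^{\!\top}.
\]
Hence the condition $\D u\,\D u^{\!\top}=c^2\mathrm{I}$ a.e.\ on $\Om$ is equivalent to
\[
\la_i(\D u)\,=\,c, \ \ \text{ a.e.\ on }\Om, \ \ i=1,\dots,N\wedge n\,=\,N.
\]

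Second, I would rescale. Setting $v:=u/c$ and $\tilde g := g/c$, the system \eqref{3.1} becomes
\[
\left\{
\begin{array}{rl}
\la_i(\D v)\,=\,1, & \text{ a.e.\ on }\Om, \ i=1,\dots,N,\\
v\,=\,\tilde g, & \text{ on }\p\Om,
\end{array}
\right.
\]
which is exactly the prescribed singular value problem \eqref{1.13} for the boundary datum $\tilde g\in W^{1,\infty}(\Om,\R^N)$. To apply Corollary \ref{cor:main} I need to check that $\la_{N\wedge n}(\D\tilde g)\leq 1-\de$ a.e.\ on $\Om$ for some $\de>0$. Since $\la_N(\cdot)$ is the largest singular value, it is bounded above by any of the standard matrix norms, in particular by $|\cdot|$, so
\[
\la_N(\D \tilde g)\, =\, \frac{1}{c}\la_N(\D g)\, \leq\, \frac{|\D g|}{c}\, \leq\, \frac{\|\D g\|_{L^\infty(\Om)}}{c} \ \text{ a.e.\ on }\Om.
\]
The hypothesis $c>\|\D g\|_{L^\infty(\Om)}$ then provides $\de:=1-\|\D g\|_{L^\infty(\Om)}/c\in(0,1]$ for which $\la_N(\D\tilde g)\leq 1-\de$ a.e.\ on $\Om$, as required.

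Corollary \ref{cor:main} now yields an infinite set of solutions $v\in W^{1,\infty}_{\tilde g}(\Om,\R^N)$ to the singular value system, and the maps $u:=cv$ belong to $W^{1,\infty}_g(\Om,\R^N)$ and satisfy \eqref{3.1}. There is no real obstacle here, only the need to be careful about the step identifying $\D u\,\D u^{\!\top}=c^2\mathrm{I}$ with equality of all singular values to $c$; the main work has already been carried out in Section \ref{section3}.
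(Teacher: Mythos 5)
Your proposal is correct and follows essentially the same route as the paper: reduce to the prescribed singular value problem via the rescaling $v=u/c$, check the hypothesis $\la_{N}(\D(g/c))\leq 1-\de$ from $c>\|\D g\|_{L^\infty(\Om)}$, apply Corollary \ref{cor:main}, and use the singular value/spectral decomposition to pass between the conditions $\la_i(\D u)=c$ and $\D u\,\D u^{\!\top}=c^2\mathrm{I}$. The only cosmetic difference is that you establish the equivalence via SVD up front, whereas the paper solves the rescaled singular value system first and then invokes the Spectral Theorem at the end to recover $\D u\,\D u^{\!\top}=c^2\mathrm{I}$.
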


\BPC \ref{claim}. Since the matrix-valued maps $(\D u\, \D u^{\!\top})^{1/2}$ and $(\D u^{\!\top}\D u)^{1/2}$ have the same non-zero eigenvalues, by the results of the previous section the Dirichlet problem
\begin{equation}\label{3.2}
\left\{
\begin{array}{rl}
\si_\al\big((\D v\, \D v^{\!\top})^{1/2}\big)\, =\, 1, \ \ \ & \text{ a.e.\ on }\Om, \ \al=1,...,N, \ms
\\
v\,=\,g/c, & \text{ on }\p\Om,
 \end{array}
 \right.
\end{equation}
(where $\{\si_1,...,\si_N\}$ symbolise the eigenvalues of the $N\by N$ matrix in increasing order) has solutions $v \in W_{g/c}^{1,\infty}(\Om,\R^N)$ because a.e.\ on $\Om$ we have that
\[
\begin{split}
\max_{\al=1,...,N}\si_\al\Big( \big(\D (g/c)\, \D (g/c)^{\!\top}\big)^{1/2}\Big)\, &=\, \frac{1}{c}\Big\{\si_N\big(\D g\, \D g^{\!\top}\big)\Big\}^{1/2}
\\
&=\, \frac{1}{c}\Big\{\max_{|e|=1} \big(\D g\, \D g^{\!\top}\big) : (e\ot e) \Big\}^{1/2}
\\ 
&=\, \frac{1}{c}\max_{|e|=1} \big|e^\top \D g\big|
\\
&\leq\, \frac{1}{c}\big\| \D g\big\|_{L^\infty(\Om)}
\\
&\leq\, 1-\de,
\end{split}
\]
for some $\de>0$. This is a consequence of Corollary \ref{cor:main}. By rescaling $u:=cv$, we have the existence of an infinite set of solutions $u \in W_{g}^{1,\infty}(\Om,\R^N)$ of
\begin{equation}\label{3.3}
\left\{
\begin{array}{rl}
\si_\al\big((\D u\, \D u^{\!\top})^{1/2}\big)\, =\, c, \ & \text{ a.e.\ on }\Om,\ \al=1,...,N,
\ms\\
u\,=\,g, & \text{ on }\p\Om.
 \end{array}
 \right.
\end{equation}
Next, by the Spectral Theorem, for each such $u$ there is a measurable map with values in the orthogonal matrices
\[
U\ : \ \ \Om\sub \R^n \larrow O(N,\R) \sub \R^{N\by N}
\]
such that
\[
\ \ \ \D u\, \D u^{\!\top}\, =\, U 
\left[
\begin{array}{lr}
\si_1\big((\D u\, \D u^{\!\top})^{1/2}\big)^2 &    \mathrm{O}\
\\
\hspace{55pt}  \ddots &
\\
\ \mathrm{O}    & \!\!\!\!\!\!\!\!\!\!\!\!\!\!\!\si_N\big((\D u\, \D u^{\!\top})^{1/2}\big)^2
\end{array}
\right]
U^\top\, =\, U(c^2\mathrm{I})U^\top\, =\, c^2\mathrm{I},
\]
a.e.\ on $\Om$. The claim thus ensues.   \qed
\ms

Now we complete the proof of the proposition. By our assumptions on $\H$ (see Theorem \ref{theorem3}), for any $u$ as above we have
\[
\H(\D u)\, =\, h\big(\D u\, \D u^{\!\top}\big)\, =\, h\big(c^2\mathrm{I}\big)
\]
and by splitting the identity as
\[
c^2\mathrm{I}\, =\, [c \mathrm{I}\, |\, \mathrm{O}] \cdot [c \mathrm{I}\, |\, \mathrm{O}]^\top
\]
where $[c \mathrm{I}\, |\, \mathrm{O}] \in \R^{N\by N} \by \R^{N\by (n-N)}$, we have
\[
\ \ \ \H(\D u)\, =\, h\Big( [c \mathrm{I}\, |\, \mathrm{O}] \cdot [c \mathrm{I}\, |\, \mathrm{O}]^\top\Big)\, =\, \H\big([c \mathrm{I}\, |\, \mathrm{O}]\big), \ \  \ \text{ a.e.\ on }\Om.
\]
Further,
\[
\det\big(\D u\, \D u^{\!\top}\big)\, =\, \det\left(
\big(\D u\, \D u^{\!\top}\big)^{1/2}\right)^{\!2}\, =\, \left\{\prod_{\al=1}^N\si_\al\! \left(
\big(\D u\, \D u^{\!\top}\big)^{1/2}\right) \! \right\}^{\!2}=\,c^{2N},
\]
a.e.\ on $\Om$. Note now that in view of the symmetry of the derivative of $h$, we have
\[
\begin{split}
\H_{P_{\al i}}(P)\, &=\, \big(h\big(PP^\top\big)\big)_{P_{\al i}}
\\
&=\, \sum_{\be,\ga=1}^N \sum_{j=1}^n h_{X_{\be \ga}}\big(PP^\top\big)\Big(P_{\be j}\de_{\al \ga}\de_{ij}\, +\, P_{\ga i}\de_{\al \be}\de_{ij} \Big)
\\
& =\, 2 \sum_{\be=1}^N  h_{X_{\al \be}}\big(PP^\top\big)\, P_{\be i},
\end{split}
\]
for all $\al=1,...,N$ and $i=1,...,n$. Hence, by the last two equalities and our assumption on $\H$ we have that $\H_P(\D u)$ has full rank:
\[
\rk\left(\H_P(\D u)\right)\, =\, \rk\left(h_X\big(\D u\, \D u^{\!\top}\big)\, \D u\right)\, =\, \rk(\D u)\, =\, N,
\]
a.e.\ on $\Om$. By \eqref{1.2}, this implies 
\[
\left[\H_P(\D u)\right]^\bot=\, \textrm{Proj}_{R(\H_P(\D u))^\bot}\, =\, \textrm{Proj}_{(\R^N)^\bot}\, =\, \textrm{Proj}_{\{0\}}\, =\, 0, 
\]
a.e.\ on $\Om$, as desired. The proposition ensues. \qed
\ms

\BPT \ref{theorem3}. Given $\Om\sub \R^n$ open, $g\in W^{1,\infty}(\Om,\R^N)$ and $c>\|\D g\|_{L^\infty(\Om)}$, let $u\in \mA_c$ be any of the solutions obtained in Proposition \ref{corollary4} of the systems of fully nonlinear equations \eqref{1.11}. We set
\begin{equation}\label{3.4}
\A^\infty_{\al i \be j}(P)\, :=\, \H_{P_{\al i}}(P)\, \H_{P_{\be j}}(P)\,+ \, \H(P)\sum_{\ga=1}^N \big[\H_P(P)\big]^\bot_{\al \ga}\H_{P_{\ga i}P_{\be j}}(P), 
\end{equation}
where $\al,\be=1,...,N$ and $i,j=1,...,n$. In order to show that $u$ is a $\mD$-solution of \eqref{1.1}, we must show that for any diffuse hessian of $u$
\begin{equation}\label{3.5}
\ \ \ \ \de_{\D^{1,h_{m}}\D u}\weakstar \mD^2 u \  \ \text{ in }\mY\big(\Om,\smash{\overline{\R}}^{N\by n^2}_s\big),
\end{equation}
as $m\ri \infty$, we have
\begin{equation}\label{3.6}
\ \ \ \int_{\smash{\overline{\R}}^{N\by n^2}_s} \Phi(\X)\, \big[ \A^\infty\big(\D u(x)\big):\X\big]\, d\big[\mD^2u(x)\big](\X)\, =\, 0
\end{equation}
for a.e.\ $x\in \Om$ and any fixed $\Phi \in C_c\big( {\R}^{N\by n^2}_s \big)$. By \eqref{1.11}, we have $\H\big(\D u(x)\big)=\, $const for a.e.\ $x\in \Om$. Fix such an $x\in \Om$, $0<|h|<\dist(x,\p\Om)$ and $i\in \{1,...,n\}$. Then, by Taylor's theorem, we have
\[
\begin{split} 
 0\, &=\, \H\big(\D u(x+he^i)\big) - \H\big(\D u(x)\big) \, =
\\
&=\,\sum_{\be=1}^N\sum_{j=1}^n\int_0^1 \H_{P_{\be j}}\Big(\D u(x) + \la \Big[\D u(x+he^i)-\D u(x)\Big] \Big)\,d\la\centerdot
\\ 
&\hspace{55pt} \centerdot \Big[\D_j u_\be(x+he^i)-\D_j u_\be (x)\Big].
\end{split}
\]
This implies the identity
\begin{equation}\label{3.7}
\begin{split} 
   &\sum_{\be=1}^N\sum_{j=1}^n \H_{P_{\be j}}\big(\D u(x)\big)\, \big(D^{1,h}_i\D_ju_\be\big)(x) 
\\
 &+\, \sum_{\be=1}^N\sum_{j=1}^n \int_0^1 \bigg[\H_{P_{\be j}}\Big(\D u(x)  + \la \Big[\D u(x+he^i)-\D u(x)\Big] \Big)
\\
& - \H_{P_{\be j}}\big(\D u(x)\big)\bigg]\,d\la \, \big(D^{1,h}_i\D_ju_\be\big)(x) \, =\, 0.
\end{split}
\end{equation}
Let now $h_m\ri 0$ as $m\ri \infty$ be an infinitesimal sequence giving rise to a diffuse hessian as in \eqref{3.5}. We define the ``error" tensor 
\begin{equation}\label{3.8}
\begin{split}
\textbf{E}^m_{\al i \be j}(x)\,& :=\, \H_{P_{\al i}}\big(\D u(x)\big)\int_0^1 \bigg[\H_{P_{\be j}}\Big(\D u(x)  + \la \Big[\D u(x+h_m e^i)
\\
&\ \ \ \ \ -\D u(x)\Big] \Big) - \H_{P_{\be j}}\big(\D u(x)\big)\bigg]\,d\la  ,
 \end{split}
\end{equation}
for a.e.\ $x\in \Om$, where $\al,\be=1,...,N$, $i,j=1,...,n$ and $m\in \N$. Then, in view of \eqref{3.8} and of the following consequence of
 \eqref{1.11}
\[
\sum_{\ga=1}^N\H\big(\D u(x)\big)\left[\H_P\big(\D u(x)\big)\right]^\bot_{\al \ga}\H_{P_{\ga i}P_{\be j}}\big(\D u(x)\big)\, =\, 0,
\]
 identity \eqref{3.7} (for $h=h_m$) yields
\begin{equation} \label{3.9a}
\begin{split}
& \sum_{\be,\ga=1}^N\sum_{i,j=1}^n \bigg\{\bigg(\H_{P_{\al i}}(\D u)\, \H_{P_{\be j}}(\D u) 
\\
& +\, \H(\D u)\big[\H_P(\D u)\big]_{\al \ga}^\bot \H_{P_{\ga i}P_{\be j}}(\D u)\bigg) \, +\, \textbf{E}^m_{\al i \be j}\bigg\}\D^{1,h_m}_{i}\D_j u_\be\, =\, 0,
\end{split}
\end{equation}
a.e.\ on $\Om$ and for all $\al=1,...,N$. In view of definition \eqref{3.4}, we rewrite \eqref{3.9a} compactly as
\[
\Big(\A^\infty\big(\D u(x)\big)+ \textbf{E}^m(x)\Big):\D^{1,h_m}\D u(x)\, =\, 0, 
\]
for a.e.\ $x\in \Om$. By multiplying by $\Phi(\D^{1,h_m}\D u)$, this gives
\begin{equation}  \label{3.9}
\int_{\smash{\overline{\R}}^{N\by n^2}_s} \Big|\Phi(\X)\, \Big(\A^\infty\big(\D u(x)\big)+ \textbf{E}^m(x)\Big):\X \Big|\, d\big[\de_{\D^{1,h_m}\D u}(x)\big|(\X)\, =\, 0,
\end{equation}
for a.e.\ $x\in \Om$. Since $\D u \in L^{\infty}(\Om,\R^{N\by n})$, by the continuity of the translations in $L^1$ we have 
\[
\D u(\cdot +he^i) \larrow \D u, \ \ \ i=1,...,n,
\]
as $h\ri 0$, in $L^1_{\text{loc}}(\Om,\R^{N\by n})$ and hence along, if necessary, a further subsequence $(m_k)_1^\infty$ we have 
\[
\D u\big(x+h_{m_k}e^i\big) \larrow \D u(x), \ \ \ \text{for a.e. }x\in \Om,
\]
as $k\ri \infty$. Since $\H \in C^1(\R^{N\by n})$ and $\D u \in L^{\infty}(\Om,\R^{N\by n})$, the Dominated Convergence Theorem and \eqref{3.8} imply that 
\begin{equation}\label{3.10}
|\textbf{E}^{m_k}| \larrow 0, \ \ \text{ in }L^1_{\text{loc}}(\Om),
\end{equation}
as $k\ri \infty$. Let $E\sub \Om$ be a compact set. Then, \eqref{3.9} gives
\begin{equation}  \label{3.11}
\int_E\int_{\smash{\overline{\R}}^{N\by n^2}_s} \Big|\Phi(\X)\,\Big(\A^\infty\big(\D u(x)\big)+ \textbf{E}^{m_k}(x)\Big):\X \Big|\, d\big[\de_{\D^{1,h_{m_k}}\D u}(x)\big](\X)\,dx\, =\, 0.
\end{equation}
We define the Carath\'eodory maps
\[
\begin{split}
\ \ \ \ \ \  \Psi^m(x,\X)\, &:=\, \chi_E(x)\Big|\Phi(\X)\,\Big(\A^\infty\big(\D u(x)\big)+ \textbf{E}^m(x)\Big):\X \Big|, \ \ \ m\in\N,
\\
\Psi^\infty(x,\X)\, &:=\, \chi_E(x)\Big|\Phi(\X)\,\Big(\A^\infty\big(\D u(x)\big)\Big):\X \Big|,
\end{split}
\]
which are elements of the Banach space $L^1\big(E,C\big(\smash{\overline{\R}}^{N\by n^2}_s \big)\big)$ by the compactness of the support of $\Phi$ and of the set $E$. We also have that 
\begin{equation}\label{3.12}
\ \ \ \Psi^{m_k} \larrow \Psi^\infty, \ \text{ as }k \ri \infty \ \text{ in }L^1\big(E,C\big(\smash{\overline{\R}}^{N\by n^2}_s \big)\big),
\end{equation}
because of the estimate
\[
\|\Psi^{m_k} -\Psi^\infty\|_{L^1(E,C(\smash{\overline{\R}}^{N\by n^2}_s ))}\, \leq\, \max_{\X \in \supp(\Phi)}\Big|\Phi(\X)\X\Big| \int_E \big|\textbf{E}^{m_k}(x)\big|\,dx
\]
which yields that the right hand side vanishes as $k\ri\infty$ as a consequence of\eqref{3.10}. The weak*-strong continuity of the duality pairing \eqref{dp}, \eqref{3.12} and \eqref{3.5} allow us to pass to the limit as $k\ri \infty$ in \eqref{3.11} and deduce
\[
\int_E\int_{\smash{\overline{\R}}^{N\by n^2}_s} \Phi(\X)\,\Big[\A^\infty\big(\D u(x)\big) :\X \Big] \, d\big[\mD^2 u(x)\big](\X)\,dx\, =\, 0.
\]
Since $E\sub \Om$ is an arbitrary compact set, the theorem ensues. \qed
\ms

\section{Remarks and open questions}

In this final section we make some further comments regarding the results stated in Theorem \ref{theorem3}, pointing also towards certain relevant open questions which we have not answered in this work. 
\smallskip

\noi $\bullet$  {An interesting question which we not attempt to answer herein regards the relation of $\mD$- and viscosity solutions in the {scalar} case of $N=1$, in particular in relation to the first order vectorial equation $\mathrm{H}(\D u)=c$. The content of Theorem \ref{theorem3} is that {\it all $W^{1,\infty}$ strong solutions to the first order system \eqref{1.11} are $\mD$-solutions to \eqref{1.1}}. This is a kind of generalisation of the {\it scalar} implication that ``differentiable solutions to the Eikonal equation $|\D u|=c$ are $\infty$-Harmonic". However, we refrained from mentioning this explicitly earlier since, when restricted to the scalar case, the notion of $\mD$-solutions is generally {weaker} than that of viscosity solutions. 
	In the recent paper \cite{K10} some further insights regarding the behaviour of $C^1$ $\mD$-solutions and their relation to the variational problem are presented, even though they do not answer this particular question.}

\smallskip

\noi $\bullet$  {A further interesting open question regards whether $C^1$-regular $\mD$-solutions to the system are in some sense ``critical points" to the functional. Clearly, $C^1$ is a very optimistic regularity expectation for putative minima or extrema of any kind and one might hope that this prominent class would have the most favourable properties. At present we do not have a definitive answer, primarily because there is no clear way of how to define an adequate notion of ``critical points" for supremal functionals.
}

\smallskip

\noi $\bullet$  {We close by noting that one might be tempted to call the solutions we construct ``critical points" of the functional, in the sense that they solve the equilibrium equations whilst not minimising the functional. However, the reader should note that our solutions are {not critical points in the classical sense} of Calculus of Variations for integral functionals.  The situation for supremal functionals, even in the scalar case, is indeed far from being well understood. In particular, for supremal functionals, even global minimisers may not solve the equations! To the best of our knowledge, nowhere in the literature any kind of $L^\infty$-notion of critical points has been defined. The only existing variational notions are those of absolute minimisers and their vectorial generalisations ($\infty$-minimal maps in \cite{K2} and tight maps in \cite{SS}).}

 \ms

\noi \textbf{Acknowledgement.} The authors are indebted to the referees for the careful reading of the paper and for their suggestions which improved both the content and the presentation, in particular for their perspicacious comments and their clarifying questions relating to Section 5. N.K. would like to thank Bernard Dacorogna for the selfless share of expertise on the problem of prescribing the singular values and Jan Kristensen for their inspiring scientific discussions about $L^\infty$ variational problems. G.P. is a member of the Gruppo Nazionale per l'Analisi Matematica, la Probabilit\`a e le loro Applicazioni (GNAMPA) of the Istituto Nazionale di Alta Matematica (INdAM).

\ms
\ms

\bibliographystyle{amsplain}

\end{document}